\documentclass{amsart}
\usepackage{graphicx} 
\usepackage{appendix}
\usepackage{bbm}

\usepackage{amsfonts,amsmath,amsthm,mathtools,amssymb}
\usepackage{tikz}
\usetikzlibrary{calc,decorations.markings,fpu}
\usetikzlibrary{hobby}
\tikzset{equally spaced dots/.style={postaction={decorate,
      decoration={markings,
        mark=
        between positions 0 and 1 step 0.9999/#1
        with
        {
          \node[circle,fill,inner sep=1pt](c-\pgfkeysvalueof{/pgf/decoration/mark info/sequence number}){};
        }
      }}}}
\tikzset{
    set arrow inside/.code={\pgfqkeys{/tikz/arrow inside}{#1}},
    set arrow inside={end/.initial=>, opt/.initial=},
    /pgf/decoration/Mark/.style={
        mark/.expanded=at position #1 with
        {
            \noexpand\arrow[\pgfkeysvalueof{/tikz/arrow inside/opt}]{\pgfkeysvalueof{/tikz/arrow inside/end}}
        }
    },
    arrow inside/.style 2 args={
        set arrow inside={#1},
        postaction={
            decorate,decoration={
                markings,Mark/.list={#2}
            }
        }
    },
}
\usepackage{todonotes}

\newcommand{\R}{\mathbb{R}}

\newcommand{\cA}{\mathcal{A}}

\newcommand{\cS}{\mathcal{S}}

\newcommand{\dd}{\textnormal{d}}
\newcommand{\DD}{\textnormal{D}}

\newcommand{\ddt}[1]{\frac{\dd #1}{\dd t}}
\newcommand{\ddtau}[1]{\frac{\dd #1}{\dd \tau}}

\DeclareMathOperator{\cycl}{Cycl}

\theoremstyle{plain}
\newtheorem{theorem}{Theorem}
\newtheorem{proposition}{Proposition}
\newtheorem{corollary}{Corollary}

\newtheorem{remark}{Remark}
\newtheorem{example}{Example}
\AtBeginEnvironment{example}{%
  \pushQED{\qed}%
}
\AtEndEnvironment{example}{\popQED\endexample}

\theoremstyle{definition}
\newtheorem{definition}{Definition}

\newcommand{\rev}[1]{\textcolor{black}{#1}}

\title[Ergodicity \& slow relation functions]{Ergodicity in planar slow-fast systems through slow relation functions}

\author{Renato Huzak}
\address{Hasselt University, Campus Diepenbeek, Agoralaan Gebouw D, 3590 Diepenbeek, Belgium}
\email{renato.huzak@uhasselt.be (corresponding author)}
\author{Hildeberto Jard\'on-Kojakhmetov}
\address{University Of Groningen, Dynamical Systems, Geometry \& Mathematical Physics, Nijenborg 9, 9747 AG, Groningen, The Netherlands}
\email{h.jardon.kojakhmetov@rug.nl}
\author{Christian Kuehn}
\address{Technical University of Munich, Department of Mathematics, School of Computation Information and Technology, Boltzmannstraße 3, 85748 Garching bei München.}
\email{ckuehn@ma.tum.de}

\date{}

\begin{document}

\maketitle

\begin{abstract}
In this paper, we study ergodic properties of the slow relation function (or entry-exit function) in planar slow-fast systems. It is well known that zeros of the slow divergence integral associated with canard limit periodic sets give candidates for limit cycles. We present a new approach to detect the zeros of the slow divergence integral by studying the structure of the set of all probability measures invariant under the corresponding slow relation function. Using the slow relation function, we also show how to estimate (in terms of weak convergence) the transformation of families of probability measures that describe initial point distribution of canard orbits during the passage near a slow-fast Hopf point (or a more general turning point). We provide formulas to compute exit densities for given entry densities and the slow relation function. We apply our results to slow-fast Li\'{e}nard equations. 
\end{abstract}

\textit{Keywords:} density, invariant measures, Li\'{e}nard equations; planar slow-fast systems; slow relation function; weak convergence

\section{Introduction}

This paper is dedicated to describing the relationship between measure-theoretic properties of the slow relation function, and the dynamic behaviour of $C^\infty$-smooth planar slow-fast systems with a curve of singularities (often called critical curve) consisting of a normally attracting branch, a normally repelling branch and a contact point between them. Essentially, we look at planar slow-fast systems with a parabola-like critical curve as in Fig. \ref{fig:parabola}. In our context, the slow relation function, see Definition \ref{def-1} in Section \ref{subsection-model} (also known as entry-exit relation, or entry-exit function \cite{Benoit,DM-entryexit,de2021canard}), is a map $S:\sigma\to\sigma$ (see a generalisation in Section \ref{subsection-densities}) measuring the balance between contraction and expansion along branches of the critical curve, where the section $\sigma$ contains the contact point. Roughly speaking, the slow relation function assigns to every point $p$ on the attracting branch the point $q$ on the repelling branch such that the slow divergence integral along the slow segment $[p,q]$ is equal to zero (see Fig. \ref{fig:parabola}). The slow divergence integral \cite[Chapter 5]{de2021canard} is the integral of the divergence of the fast subsystem (singular perturbation parameter is zero), computed along the critical curve with respect to the so-called slow time (for more details we refer the reader to Section \ref{preliminaries-slow-fast} and Section \ref{subsection-model}).
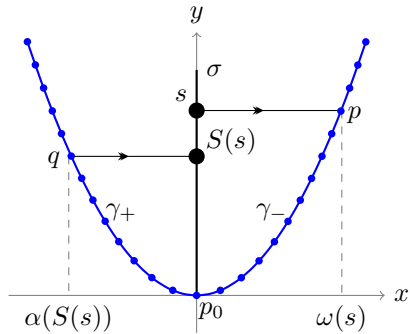
\begin{figure}[htbp]
    \centering
    \begin{tikzpicture}
  \draw[->,thin,gray] (-2.5, 0) -- (2.5, 0) node[right,black] {$x$};
  \draw[->,thin,gray] (0, -.5) -- (0, 3.5) node[above,black] {$y$};
  
  \draw[thick,black] (0,0)--(0,3) node [above,right] {$\sigma$};
  \draw[fill=black] (0,2.46) circle (0.1) node[above,left,yshift=2mm]{$s$};
  \draw[fill=black] (0,1.85) circle (0.1) node[below,right,yshift=2mm]{$S(s)$};
  \draw[black] (0,2.46)--(1.9,2.46) [arrow inside={end=stealth,opt={black,scale=1}}{0.45,0.47}];
  \draw[black] (-1.70,1.85)--(0,1.85) [arrow inside={end=stealth,opt={black,scale=1}}{0.45,0.47}];
  \draw[thin,gray,dashed] (-1.7,1.85)--++(0,-1.85) node[below,black]{$\alpha(S(s))$};
  \draw[thin,gray,dashed] (1.93,2.46)--++(0,-2.46) node[below,black]{$\omega(s)$};
  \draw[scale=0.75, domain=-3:3, smooth, variable=\x, blue,thick,/pgf/fpu/install only={reciprocal},equally spaced dots=26] plot ({\x}, {1/2*\x*\x});
  \node at (-1,1.1) {$\gamma_+$};
  \node at (1,1.1) {$\gamma_-$};
  \node at (0.2,-0.2) {$p_0$};
  \node at (2.1,2.4) {$p$};
   \node at (-1.9,1.8){$q$};
\end{tikzpicture}
    \caption{A slow-fast system with a contact point $p_0$. The blue curve is the curve of singularities (or critical curve) where $\gamma_-$ and $\gamma_+$ represent the attracting and repelling branches, respectively. Under appropriate assumptions (Section \ref{subsection-model}), the slow relation function $S:\sigma\to\sigma$ can be defined, having the following property: the slow divergence integral associated to the critical curve between $\omega(s)$ and $\alpha(S(s))$ is zero. We study measure-theoretic properties of $S$, and relate them with dynamical behaviour of the system.}
    \label{fig:parabola}
\end{figure}
\smallskip

 The slow relation function can be used, for example, to describe (singular) periodic orbits around the contact point, and more generally, to describe transitions across singularities of slow-fast systems \cite{Benoit,DM-entryexit,de2021canard,SDICLE1,haiduc2008horseshoes,yao2022cyclicity}. A natural question that arises for a small but positive value of the singular perturbation parameter is: if an orbit is attracted to the attracting branch near a point $p$, follows that attracting branch, passes near the contact point (called turning point) and follows the repelling branch, how do we detect a point $q$ where the orbit leaves the repelling branch (see Fig. \ref{fig:parabola} and Fig. \ref{fig-motivation-introduction}(a))? We call such orbits canard orbits \cite{de2021canard,kuehn2015multiple,Martin}. Under appropriate assumptions on the slow-fast system, we can find $q$ using the slow relation function (see \cite{Benoit,DM-entryexit} and Proposition \ref{prop:slowrel} in Section \ref{subsection-densities}). The slow relation function (together with the slow divergence integral) also plays an important role in determining the number of limit cycles produced by canard cycles \cite{de2021canard} (i.e.~limit periodic sets consisting of a fast orbit and the portion of the critical curve between the $\alpha$ and $\omega$ limits of that fast orbit, see Fig. \ref{fig-motivation-introduction}(b)). The study of planar canard cycles is motivated by the famous Hilbert’s 16th problem \cite{smale} (see \cite{Artes,Gavrilov,Mardesic,SDICLE1,DPR,1996,PWS2023} and references therein) and by applications (predator-prey models \cite{Broer,Zhu}, electrical circuits, (bio)chemical reactions \cite{Kosiuk,oxid}, neuroscience \cite{hasan2018saddle,moehlis2006canards,wechselberger2013canard,de2015neural}, among many others).  The slow relation function is indeed closely related to the concept of delayed loss of stability \cite{AiSadhu,Zha}, and is also important in fractal analysis of planar slow-fast systems \cite{BoxNovo,BoxRenato,BoxDarko}.
 \smallskip

 \begin{figure}[htb]
	\begin{center}
		\includegraphics[width=9.6cm,height=2.8cm]{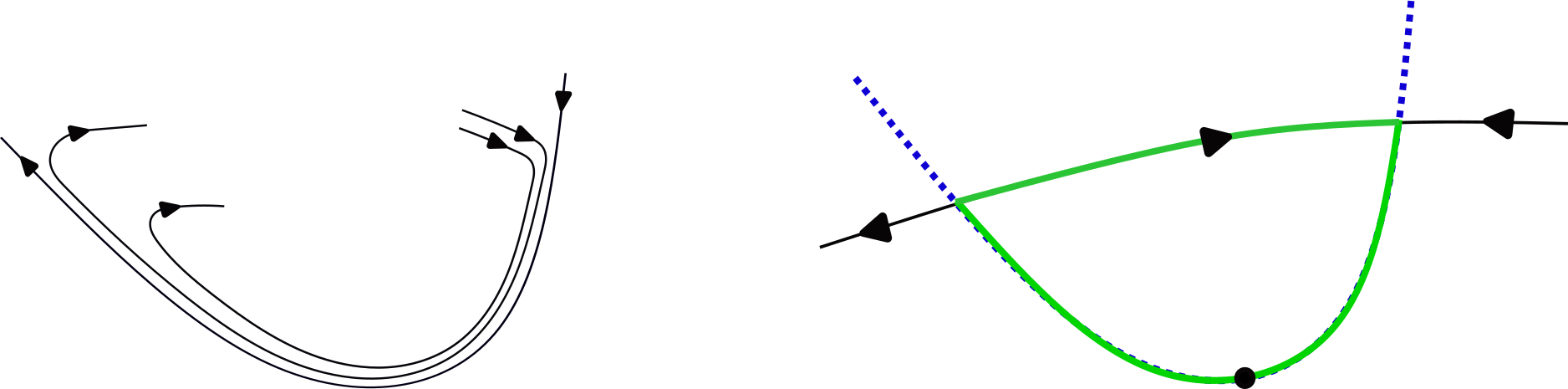}
  {\footnotesize
\put(-220,-13){$(a)$}
\put(-70,-13){$(b)$}
\put(-82,50){$\Gamma$}
  }
		         \end{center} 
	\caption{(a) Canard orbits. (b) Canard cycle $\Gamma$ (green).}
	\label{fig-motivation-introduction}
\end{figure}

One of our main motivations to bring ergodic theory into play, is to be able to describe the behaviour of ensembles of orbits, instead of single ones. For example, \cite{kuehn2017uncertainty} studies the problem of how densities of (uncertain \rev{or random}) initial conditions are transformed, via the flow of the slow-fast system, as the corresponding orbits cross a Hopf bifurcation. In particular,  \cite{kuehn2017uncertainty} finds concrete systems for which, given a density of initial conditions, such a density is transformed in particular ways, or even into a desired one. \rev{We point out that \cite{kuehn2017uncertainty} considers mostly problems at the level $\epsilon=0$ and that weak convergence of exit densities are not discussed. In this paper, we put emphasis precisely on the weak convergence and asymptotics of exit densities, see Section \ref{subsection-densities} and \ref{section-numerics} for more details. Other works that include randomness in the vector field have also considered ``entrance-exit'' asymptotics in the framework of heteroclinic networks, see \cite{bakhtin2011,bakhtin2022}  and references therein. In our context, adding generic stochastic forcing to slow-fast planar vector fields is going to destroy all canard phenomena~\cite{BerglundGentz,BerglundGentzKuehn} involving a long delay near unstable branches, unless such randomness is exponentially small~\cite{Sowers}. 
}
\smallskip

Important connections between ergodicity and slow-fast systems can be found in \cite{EGK,Melbourne} (homogenization of slow-fast systems), \cite{Kuske, Zeghl} (multiscale stochastic ordinary differential equations and bifurcation delay), and \cite{KuLu,KuehnQuenched} (randomness in parameters and bifurcations). See also \cite{Prohens,CollGasullProhens} for results on limit cycles in random planar vector fields.\smallskip

In this paper we deal with smooth nilpotent contact points of arbitrary even contact order (infinite contact order is possible) and odd singularity order. There is an additional assumption: such contact points have finite slow divergence integral. Then we can define the slow relation function. For more details see Section \ref{subsection-model}. The contact order of a slow-fast Hopf point (often called generic turning point) is $2$ and its singularity order is $1$. Non-generic turning points have contact order $2n$ and singularity order $2n-1$ with $n>1$.

The results we present can be classified into two types:
\begin{enumerate}
    \item First, we relate invariant probability measures of the slow relation function with zeros of the slow divergence integral (Theorem \ref{theorem-1} in Section \ref{subsection-limit cycles}). More precisely, we show that the slow divergence integral has no zeros if and only if the slow relation function is uniquely ergodic (see the slow-fast van der Pol system in Example \ref{example-vanderpol}). Furthermore, the slow divergence integral has $k$ zeros (counted without their multiplicity) if and only if the invariant measures are supported on a set with $k+1$ elements (they are convex combinations of $k+1$ Dirac delta measures).  For slow-fast systems with a slow-fast Hopf point or a non-generic turning point, we relate invariant measures of the slow relation function with the cyclicity of canard cycles (Theorem \ref{theorem-2} and Theorem \ref{theorem-3} in Section \ref{subsection-limit cycles}).
    \item  The second type of results is related to entry-exit probability measures. That is, we consider entry measures compactly supported near the attracting branch of the critical curve, and study how they are transformed near the repelling branch, after passage close to a slow-fast Hopf point or a non-generic turning point (Theorem \ref{mainthm-weak} in Section \ref{subsection-densities}). The transformed measures are push-forward measures of the entry measures and we call them the exit measures. The entry and exit measures depend on the singular perturbation parameter denoted by $\epsilon>0$.
    
    Depending on the setup, see more details in Section \ref{sec:statements}, there are two important regions for the dynamics: the tunnel and the funnel regions. In the tunnel region, we show that, if the entry measures converge weakly to a measure $\mu_0$ as $\epsilon\to 0$, then the exit measures converge weakly to the push-forward of $\mu_0$ under the slow relation function, as $\epsilon\to 0$ (Theorem \ref{mainthm-weak}(a)). 
    
    In the presence of both tunnel and funnel regions, separated by a buffer point, the exit measures converge weakly to a more complex measure having two components, one coming from the tunnel behavior (the push-forward of $\mu_0$ under the slow relation function) and the other coming from the funnel behavior (Dirac delta measure concentrated on the image of the buffer point under the slow relation function). Here we also assume that the entry measures converge weakly to a measure $\mu_0$ as $\epsilon\to 0$. For a precise statement of this result we refer the reader to Theorem \ref{mainthm-weak}(b).

    Suppose that $\mu_0$ has density. Then we provide a formula to compute the density of the push-forward of $\mu_0$ under the slow relation function, called the exit density (see Proposition \ref{prop-Frob} in Section \ref{subsection-densities}). 
\end{enumerate}

We often give examples using slow-fast Li\'{e}nard equations (see system \eqref{eq:sf2} in Section \ref{preliminaries-slow-fast}). The main advantage of the Li\'{e}nard model is a simpler expression for the slow divergence integral, see \eqref{SDI-added} in Section \ref{preliminaries-slow-fast}. For example, the divergence of \eqref{eq:sf2} is independent of $y$. We refer to e.g. \cite{SDICLE1,DPR}. Using Proposition \ref{prop-Frob}, we find concrete formulas to compute the exit densities for slow-fast Li\'{e}nard equations (see Corollary \ref{corollary-density} in Section \ref{subsection-densities}). 

\smallskip

For the sake of readability we have chosen to state Theorem \ref{theorem-3} and Theorem \ref{mainthm-weak} for a class of slow-fast Li\'{e}nard equations. However, we point out that they can be stated and proved in a more general framework \cite{DM-entryexit}, even for more degenerate contact points than the nilpotent contact points. In fact, Proposition \ref{prop:slowrel} that we use in the proof of Theorem \ref{mainthm-weak} (Section \ref{proof-thmweak}) is true for a broader class of planar slow-fast systems studied in \cite{DM-entryexit}.
\smallskip

 \smallskip

 The paper is organized as follows. In Section \ref{sect-erg+SF} we recall some basic concepts in ergodic theory and planar slow-fast systems. In Section \ref{sec:statements} we define our planar slow-fast model (see also Section \ref{preliminaries-slow-fast}) and state our main results. Section \ref{section-numerics} is devoted to numerical examples, and in Sections \ref{section-proofs} and \ref{proof-thmweak} we prove the main results.

\section{Preliminaries and some notation}\label{sect-erg+SF}
In Section \ref{sec-ergodic} we recall some important definitions and results in ergodic theory that we will use in our paper. The reader may be referred to, e.g. \cite{Athreya,Bill,katok,Lasota,Sarapa,viana2016foundations} and references therein for further details.
In Section \ref{preliminaries-slow-fast} we recall the notions of curve of singularities, fast foliation, normally hyperbolic singularity, contact point, slow vector field, slow divergence integral, etc., in planar slow-fast systems (for more details see \cite[Chapters 1--5]{de2021canard} and \cite{kuehn2015multiple,Martin}).   

\subsection{Ergodic theory}\label{sec-ergodic}
Assume that $X$ is a measure space. More precisely, $X$ is the short-hand notation for the triplet $(X,\cA,\mu)$ where $(X,\cA)$ is a measurable space with $\cA$ a $\sigma$-algebra of subsets of $X$, for which a measure $\mu:\cA\to [0,+\infty]$ is defined. If $\mu(X)=1$, one usually says that $\mu$ is a probability measure, and calls $(X,\cA,\mu)$ a probability space. In this paper we deal with probability measures. We say that $\mu$ is supported on $A\in\cA$ if $\mu(X\setminus A) = 0$. A map $f:X\to X$ being measurable means that if $A\in\cA$ then $f^{-1} (A) \in\cA$. One further says that $\mu$ is $f$-invariant if $\mu(f^{-1}(A))=\mu(A)$ for all $A\in\cA$. In this case, one can also say that $f$ preserves $\mu$. For example, a Dirac measure $\delta_x$ at $x\in X$, defined by $\delta_x(A)\coloneqq\begin{cases}
    1, & x\in A\\
    0, & x\notin A
\end{cases}$, is $f$-invariant if and only if $x$ is a fixed point of $f$.
\smallskip

An $f$-invariant probability measure $\mu$ is said to be ergodic (w.r.t.~$f$) if for any measurable set $A\in\cA$ such that $f^{-1}(A)=A$ either $\mu(A)=0$ or $\mu(A)=1$. Further, we say that a measurable map $f : X\to X$ is uniquely ergodic if it admits
exactly one invariant probability measure (this invariant probability measure  has to be ergodic w.r.t.~$f$). It is well-known that the space of all $f$-invariant probability measures is convex: if $\mu$ and $\tilde\mu$ are $f$-invariant probability measures, then $(1-t)\mu +t \tilde \mu$, for any $t \in ]0,1[$, is also $f$-invariant. The ergodic probability measures are the extremal
points of this convex set (for more details see e.g. \cite[Proposition 4.3.2]{viana2016foundations}). 
\smallskip

An important question is whether an invariant probability measure exists for a given $f : X\to X$. This leads to the following fundamental result, due to Krylov-Bogolubov \cite[Theorem 4.1.1]{katok}: If $X$ is a compact metric space and $f:X\to X$ a continuous map, then $f$ has an invariant Borel probability measure. Here, $\cA$ is the Borel $\sigma$-algebra of $X$, often denoted by $\mathcal{B}$ (i.e. the $\sigma$-algebra generated by the open (and therefore also closed) subsets of $X$). A probability measure defined on the Borel $\sigma$-algebra $\mathcal{B}$ of a metric (or topological) space $X$ is called a Borel probability measure. In Section \ref{subsection-limit cycles} $X$ will be a compact metric space (a segment in $\mathbb R$) and $f:X\to X$ continuous (see Remark \ref{remark-borel-sigma} in Section \ref{subsection-model}).
\smallskip

One of the most important results in ergodic theory is the Poincar\'{e} recurrence theorem (see Theorem \ref{thm-Poin} in Section \ref{section-proof-1}). Roughly speaking, this result states that $f$-invariant Borel probability measures on a topological space $X$ imply recurrence for $f$ (the definition of recurrent points is given in Section \ref{section-proof-1}). We use the Poincar\'{e} recurrence theorem in the proof of Theorem \ref{theorem-1} (Section \ref{section-proof-1}).

\smallskip


\smallskip

Let $\mu_\epsilon$, with $\epsilon\in ]0,\epsilon_0]$, $\epsilon_0>0$, and $\mu_0$ be Borel probability measures on $\mathbb R$ with the usual Borel $\sigma$-algebra $\mathcal B$. We say that $\mu_{\epsilon}$ converges  weakly (or in distribution) to $\mu_0$ as $\epsilon\to 0$ if 
$$\lim_{\epsilon\to 0}\int_{\mathbb R}\chi(x)\mu_{\epsilon}(dx)=\int_{\mathbb R}\chi(x)\mu_{0}(dx),$$
for every bounded, continuous function $\chi:\mathbb R\to\mathbb R$ (see e.g. \cite{Bill}). The integrals are Lebesgue integrals. 


Let $\mathcal B$ be the usual Borel $\sigma$-algebra of $\mathbb R$ and let $\mu$ be a Borel probability measure on $\mathbb R$. If $f: \mathbb R\to \mathbb R$ is a measurable function, then the push-forward probability measure of $\mu$ is defined as 
$$\mu f^{-1}(A):=\mu\left(f^{-1}(A)\right), \ A\in\mathcal B. $$
Weak convergence is preserved by continuous mappings (see \cite[pg. 20]{Bill}): if $f$ is continuous and $\mu_{\epsilon}$ converges  weakly to $\mu_0$ as $\epsilon\to 0$, then $\mu_{\epsilon}f^{-1}$ converges  weakly to $\mu_0f^{-1}$ as $\epsilon\to 0$.


We will sometimes work with absolutely continuous probability measures w.r.t. the Lebesgue measure on $\mathbb R$ (Section \ref{subsection-densities} and Section \ref{section-numerics}). A Borel probability measure $\mu$ is absolutely continuous w.r.t. the Lebesgue measure if
$$\mu(A)=\int_A D(x)dx, \ A\in\mathcal B,$$
where $D\in L^1(\mathbb R)$ and $D\ge 0$ ($L^1(\mathbb R)$ is the space consisting of all possible Lebesgue integrable functions $\mathbb R\to  \mathbb R$). We call $D$ the density of $\mu$. We refer to \cite[Definition 3.1.4]{Lasota}.

\subsection{Planar slow-fast systems}\label{preliminaries-slow-fast}

We consider a smooth planar slow-fast system defined on an open set $M\subset\mathbb R^2$
\begin{equation}\label{model}
    X_{\lambda,\epsilon}= X_{\lambda,0}+\epsilon Q_\lambda+O(\epsilon^2)
\end{equation}
where $0\leq\epsilon\ll1$ is the singular perturbation parameter, $\lambda$ is a regular parameter kept in a small  neighborhood of $\lambda_0\in\mathbb R^{r}$ (we often write $\lambda\sim\lambda_0$), and $X_{\lambda,0}$ and $Q_\lambda$ are smooth $\lambda$-families of vector fields. In this paper smooth means $C^\infty$-smooth.  We assume that the fast subsystem $X_{\lambda,0}$ has a set of non-isolated singularities $\mathcal S_\lambda$, for all $\lambda\sim\lambda_0$, and that for each $p\in \mathcal S_{\lambda_0}$ there exists an open neighborhood $U\subset M$ of $p$ such that $X_{\lambda,0}=F_\lambda Z_\lambda$ on $U$. Here, $F_\lambda$ is a smooth family of functions with $\nabla F_\lambda(p)\ne 0$, for all $p\in\{F_\lambda=0\}$, and $Z_\lambda$ is a smooth family of vector fields without singularities. It is clear that $\mathcal S_\lambda\cap U=\{F_\lambda=0\}$ and $\mathcal S_\lambda$ is a one-dimensional submanifold of $M$. We call $\mathcal S_\lambda$ the curve of singularities or critical curve.
In \cite[Section 1.1]{de2021canard} $\{U,Z_\lambda,F_\lambda\}$ is called an admissible expression for $X_{\lambda,0}$ near $p$. Notice that the pair $(Z_\lambda,F_\lambda)$ is not unique: we can take $(\rho_\lambda Z_\lambda,\frac{1}{\rho_\lambda} F_\lambda)$ where $\rho_\lambda$ is a nowhere zero smooth function. We denote by $t$ the time variable related to \eqref{model} and call it the fast time.  

\begin{example}\label{example-1}
    A standard example of a planar slow-fast system is the singularly perturbed Li\'{e}nard equation
\begin{equation}\label{eq:sf1}
    \begin{split}
        \epsilon\ddtau{x} &=y- f_\lambda(x)\\
        \ddtau{y} &= g(x,\lambda,\epsilon),
    \end{split}
\end{equation}
where $f_\lambda,g$ are smooth, $(x,y)\in\R^2$, $\lambda\sim\lambda_0\in\R^r$ are parameters, and $0\leq\epsilon\ll1$ is a small parameter accounting for the timescale difference between the fast variable $x$ and the slow variable $y$. $\tau$ is called the slow time variable. The time rescaling $\dd\tau=\epsilon\dd t$ ($t$ is the fast time) leads to the equivalent representation
\begin{equation}\label{eq:sf2}
    Y_{\lambda, \epsilon}:\begin{cases}
        \ddt{x}=y- f_\lambda(x)\\
        \ddt{y} = \epsilon g(x,\lambda,\epsilon),
    \end{cases}
\end{equation}
in which case, for example, $F_\lambda(x,y)=y- f_\lambda(x)$, \rev{$Z_\lambda = \begin{bmatrix}
           1\\
           0 
         \end{bmatrix}$ and $Q_\lambda = \begin{bmatrix}
           0\\
           g(x,\lambda,0) 
         \end{bmatrix}$.} 
The curve of singularities is defined as the set
\begin{equation}
    \cS_\lambda=\left\{ (x,y)\in\R^2\,|\, y=f_\lambda(x) \right\},
\end{equation}
and represents the phase-space and the set of singularities of the limit $\epsilon\to0$ of \eqref{eq:sf1} and \eqref{eq:sf2}, respectively. System $Y_{\lambda, \epsilon}$ is of type \eqref{model}.
\end{example}

The fast foliation of $X_{\lambda,0}$ is denoted by $\mathcal F_\lambda$ and is defined as follows: $\mathcal F_\lambda$ is a smooth $1$-dimensional foliation on $M$ tangent to $Z_\lambda$ in each admissible local expression $\{U,Z_\lambda,F_\lambda\}$ for $X_{\lambda,0}$.
The orbits of the fast flow of $X_{\lambda,0}$, away from $\cS_\lambda$, are located inside the leaves of the fast foliation (we denote by $l_{\lambda,p}$ the leaf
through $p\in M$). For more details we refer to \cite[Chapter 1]{de2021canard}. In Example \ref{example-1}, the fast foliation is given by horizontal lines (see e.g. Fig. \ref{fig:parabola}).
\smallskip

A point $p\in\cS_\lambda$ is called \emph{normally hyperbolic} if the Jacobian matrix
$\DD X_{\lambda,0}(p)$ has a non-zero eigenvalue denoted $E_\lambda(p)$ ($p$ is attracting if $E_\lambda(p)<0$ or repelling if $E_\lambda(p)>0$). Notice that there is one zero eigenvalue with eigenspace $T_p\cS_\lambda$.  The  eigenspace of the nonzero eigenvalue $E_\lambda(p)$ is $T_pl_{\lambda,p}$ and $E_\lambda(p)$ is equal to the trace of $\DD X_{\lambda,0}(p)$ or the divergence of the vector field $X_{\lambda,0}$ w.r.t.~the standard area form on $\mathbb R^2$, computed in $p$. A point $p\in\cS_\lambda$ is called a contact point (between $\cS_\lambda$ and $\mathcal F_\lambda$) when $\DD X_{\lambda,0}(p)$ has two zero eigenvalues. Contact points are nilpotent due to the above-mentioned assumption on $Z_\lambda$ and $F_\lambda$. A curve $\gamma\subset\cS_\lambda$ is called normally attracting (resp. repelling) if every point $p\in\gamma$ is normally hyperbolic and attracting (resp. repelling). For the Li\'{e}nard system \eqref{eq:sf2}, we have $E_\lambda(p)=-f_\lambda'(x)$ with $p=(x,f_\lambda(x))$, and $p$ is normally attracting (resp.~ repelling and contact point) if $f_\lambda'(x)>0$ (resp. $f_\lambda'(x)<0$ and $f_\lambda'(x)=0$).
\smallskip

It is important to define the notion of contact order and singularity order of a contact point $p_0$ for $\lambda=\lambda_0$ (\cite[Section 2.2]{de2021canard}): we call \rev{intersection multiplicity}\footnote{\rev{If the curves are graphs of smooth functions $y=f_1(x)$ and $y=f_2(x)$ in a neighborhood of $p_0$ corresponding to $(x,y)=(0,0)$, then the \emph{intersection multiplicity} is the multiplicity of the zero $x=0$ of $f_1-f_2$.}} at $p_0$ between $\cS_{\lambda_0}$ and the leaf $l_{\lambda_0,p_0}$ the contact order of $p_0$ and denote it by $\mathbbmss n$. Moreover, for any admissible expression  $\{U,Z_\lambda,F_\lambda\}$ for $X_{0,\lambda}$ near $p_0$ and for any area form $\Omega$ on $U$, the order at $p_0$ of the function $\Omega(Q_{\lambda_0},Z_{\lambda_0})|_{\cS_{\lambda_0}\cap U}:  p\in \cS_{\lambda_0}\cap U\mapsto \Omega(Q_{\lambda_0},Z_{\lambda_0})( p)$ is called the singularity order of $p_0$, denoted by $\mathbbmss m$. The definition of singularity order is independent of the choice of the admissible expression near $p_0$ and $\Omega$ (see \cite[Lemma 2.1]{de2021canard}). For \eqref{eq:sf2} in Example \ref{example-1} with a contact point $p_0=(0,0)$, $\mathbbmss n\ge 2$ is equal to the order at $x = 0$ of $f_{\lambda_0}(x)$ \rev{(i.e. the multiplicity of zero $x=0$ of $f_{\lambda_0}(x)$)} and $\mathbbmss m\ge 0 $ is the order at $x = 0$ of $g(x,\lambda_0,0)$ (see also Remark \ref{remark-even} in Section \ref{subsection-model}).

\smallskip

Let $p\in\cS_\lambda$ be normally hyperbolic. Let $ \hat Q_\lambda(p)\in T_p\cS_\lambda$ be the linear projection of $Q_\lambda(p)$ on $T_p\cS_\lambda$ in the direction parallel to the eigenspace $T_pl_{\lambda,p}$ defined above (recall that the vector field $Q_\lambda$ comes from \eqref{model}). The family $\hat Q_\lambda$ is called the slow vector field, and its flow is called the slow dynamics. The time variable of the slow dynamics is the slow time $\tau=\epsilon t$. This definition and the classical one using center manifolds are equivalent (for more details see \cite[Chapter 3]{de2021canard}). If we take \eqref{eq:sf2}, then we get 
 \rev{
\begin{equation}\label{eq-SVF-example}
    \hat Q_{\lambda}:\begin{cases}
        \frac{dx}{d\tau}=\frac{g(x,\lambda,0)}{f_\lambda'(x)}\\
        \frac{dy} {d\tau}= g(x,\lambda,0),
    \end{cases}
\end{equation}
}
when $f_\lambda'(x)\ne 0$.

\smallskip

Let $\gamma\subset \cS_\lambda$ be a normally hyperbolic segment not containing singularities of the slow vector field $\hat Q_\lambda$. We define the slow divergence integral \cite[Chapter 5]{de2021canard} associated to $\gamma$ as
\begin{equation}
    \label{SDI-classicaldef}
 I(\gamma,\lambda)=\int_{ \tau_1}^{ \tau_2}E_{\lambda}(\tilde\gamma( \tau))\dd \tau,
\end{equation}
where $E_\lambda$ is the non-zero eigenvalue function defined above, $\tilde\gamma:[ \tau_1, \tau_2]\to\mathbb R^2$, $\tilde \gamma'( \tau)=\hat Q_\lambda(\tilde\gamma( \tau))$ and $\tilde\gamma ( \tau_1)$ and $\tilde\gamma( \tau_2)$ are the end points of the segment $\gamma$. The segment $\gamma$ is parameterized by the slow time $\tau$. This definition does not depend on the choice of parameterization $\tilde \gamma$ of $\gamma$. Note that $I(\gamma,\lambda)$ is the integral of the divergence of the fast subsystem $X_{\lambda,0}$ computed along $\gamma$ w.r.t. the slow time $\tau$. If $\gamma$ is normally attracting (resp. repelling), then $I(\gamma,\lambda)$ is negative (resp. positive). We point out that the slow divergence integral is invariant under smooth equivalences\footnote{Smooth equivalence means smooth coordinate change and division by a smooth positive function.}, see \cite[Section 5.3]{de2021canard} and Section \ref{subsection-model}. 

Consider \eqref{eq:sf2}. Let $\gamma\subset \cS_\lambda$ be a normally hyperbolic segment parameterized by $x\in [x_1,x_2]$, $x_1<x_2$. Assume that the slow vector field \eqref{eq-SVF-example} has no singularities in $\gamma$ and points, for example, from $x_2$ to $x_1$. Then
\begin{equation}\label{SDI-added}I(\gamma,\lambda)=-\int_{x_2}^{x_1}\frac{\left(f_{\lambda}'(x)\right)^2}{g(x,\lambda,0)}\dd x. 
\end{equation}
Note that the divergence is given by $-f_\lambda'(x)$ and $\dd\tau=\frac{f_\lambda'(x)}{g(x,\lambda,0)}\dd x$, using the $x$ component of \eqref{eq-SVF-example}.

Based on \cite[Definition 5.2]{de2021canard}, in Section \ref{subsection-model} we generalise the definition \eqref{SDI-classicaldef} of the slow divergence integral. We allow the presence of a contact point in one of the boundary points of the segment $\gamma$. This plays an important role when we introduce the notion of slow relation function (see Definition \ref{def-1} in Section \ref{subsection-model}).

\section{Assumptions and statement of the results}\label{sec:statements}
In Section \ref{subsection-model} we focus on the slow-fast family $X_{\lambda,\epsilon}$ defined in \eqref{model} and make some assumptions on $\mathcal S_\lambda$, $\mathbbmss m$, $\mathbbmss n$ and $\hat Q_{\lambda}$. Then we define the slow relation function. We state our main results in Section \ref{subsection-limit cycles} (Theorem \ref{theorem-1}--Theorem \ref{theorem-3}) and Section \ref{subsection-densities} (Theorem \ref{mainthm-weak}). See also Proposition \ref{prop-Frob} and Corollary \ref{corollary-density} in Section \ref{subsection-densities}.

\subsection{Assumptions and slow relation function}\label{subsection-model}
Consider system $X_{\lambda,\epsilon}$. We use the notation from Section \ref{preliminaries-slow-fast}. First we assume that the curve of singularities $\cS_{\lambda_0}$ consists of a normally attracting branch, a normally repelling branch and a contact point between them.\\
\\
\textbf{Assumption 1} We have $\cS_{\lambda_0}=\gamma_-\cup\{p_0\}\cup \gamma_+$, where $\gamma_-$ is normally attracting, $\gamma_+$ is normally repelling and $p_0$ is a contact point (see Fig. \ref{fig-ergodic-model}).\\

\begin{figure}[htb]
	\begin{center}
		\includegraphics[width=4.4cm,height=3.5cm]{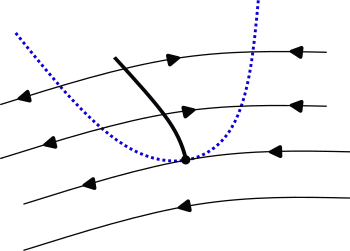}
  {\footnotesize
\put(-47,85){$\gamma_-$}
\put(-110,76){$\gamma_+$}
\put(-67,28){$p_0$}
\put(-73,67){$\sigma$}
  }
		         \end{center} 
	\caption{Dynamics of $X_{\lambda_0,0}$, with contact point $p_0$ separating normally attracting branch $\gamma_-$ and normally repelling branch $\gamma_+$.}
	\label{fig-ergodic-model}
\end{figure}

In Example \ref{example-1} (Section \ref{preliminaries-slow-fast}) Assumption 1 is satisfied if, for instance,
\begin{equation}\label{eq-conditions-up}
f_{\lambda_0}(0)=f_{\lambda_0}'(0)=0, \ f_{\lambda_0}'(x)>0 \text{ for } x>0, \ f_{\lambda_0}'(x)<0 \text{ for } x<0. 
\end{equation}
The contact point $p_0$ is given by $(x,y)=(0,0)$, $\gamma_-=\{\cS_{\lambda_0}\,|\,x>0\}$ and $\gamma_+=\{\cS_{\lambda_0}\,|\,x<0\}$.

\begin{remark}\label{remark-even}
    From Assumption 1 it follows that the contact order $\mathbbmss n$ (Section \ref{preliminaries-slow-fast}) of $p_0$ has to be even (when $\mathbbmss n$ is finite). Indeed, since $p_0\in \cS_{\lambda_0}$ is a nilpotent contact point for $\lambda=\lambda_0$ (see Assumption 1), there exist smooth local coordinates $( x, y)$ such that $p_0=(0,0)$ in which, up to multiplication by a strictly positive function, the slow-fast system $X_{\lambda,\epsilon}$ in \eqref{model} with $(\epsilon,\lambda)\sim(0,\lambda_0)$ can be written as

    \rev{
\begin{equation}\label{normal-form}
    \begin{cases}
        \dot{x}=y-f_\lambda(x)\\
        \dot{y} = \epsilon\left(g(x,\lambda,\epsilon)+ ( y-f_\lambda( x)) h( x, y,\lambda,\epsilon)\right),
    \end{cases}
\end{equation}
} where $f_\lambda$ and $g$ are given in \eqref{eq:sf2}, $h$ is a smooth function and $f_{\lambda_0}(0)=f_{\lambda_0}'(0)=0$ (see \cite[Proposition 2.1]{de2021canard}). Thus, \eqref{normal-form} is a normal form for smooth equivalence. Following \cite[Section 2.2]{de2021canard}, we can read the contact order of $p_0$ and the singularity order of $p_0$ from the normal form \eqref{normal-form}: $\mathbbmss n\ge 2$ is the order of the function $f_{\lambda_0}(x)$ at $x = 0$ and $\mathbbmss m\ge 0 $ is the order of $g(x,\lambda_0,0)$ at $x = 0$ (this is independent of the choice
of coordinates for the normal form \eqref{normal-form}). Now, since $p_0$ separates the attracting portion $\gamma_-\subset \cS_{\lambda_0}$ and the repelling portion $\gamma_+\subset\cS_{\lambda_0}$ (Assumption 1), it is clear that $f_{\lambda_0}'(x)\ne 0$ for $x\ne 0$ and $f_{\lambda_0}'(x)$ changes sign as one varies $x$ through $0$. Thus, $\mathbbmss n$ is even or $\mathbbmss n=\infty$, and $\cS_{\lambda_0}$ is a ``parabola-like" curve of singularities (see Fig. \ref{fig-ergodic-model}).
\end{remark}

In order to avoid any confusion we shall distinguish two cases when we use the slow-fast Li\'{e}nard equation $Y_{\lambda,\epsilon}$ in \eqref{eq:sf2}: the local case where $Y_{\lambda,\epsilon}$ appears in the normal form \eqref{normal-form} ($Y_{\lambda,\epsilon}$ is defined in a small neighborhood of the contact point $p_0=(0,0)$) and the global case where $Y_{\lambda,\epsilon}$ is defined on open set $M\subset \mathbb R^2$, often $M= \mathbb R^2$ (see Section \ref{subsection-limit cycles}, Section \ref{subsection-densities} and Section \ref{section-numerics}). In the global case we always assume that the contact point $p_0$ is located at the origin in the $(x,y)$-space and that \eqref{eq-conditions-up} holds.

Using Assumption 1 it is also clear that the slow vector field $\hat Q_{\lambda_0}(p)$ is well-defined for all $p\in \gamma_-\cup\gamma_+$ (see Section \ref{preliminaries-slow-fast}).
\smallskip

The next assumption deals with the singularity order of $p_0$.\\
\\
\textbf{Assumption 2} We suppose that the singularity order $\mathbbmss m$ of the contact point $p_0$ is finite and odd.

\begin{remark}\label{remark-direction}
Assumption 2 and Remark \ref{remark-even} imply that the slow vector field $\hat Q_{\lambda_0}$ points from $\gamma_-$ to $\gamma_+$ or from $\gamma_+$ to $\gamma_-$, near the contact point $p_0$ (hence, it is not directed towards $p_0$ or away from $p_0$ on both sides of $p_0$). To see this, it suffices to use the normal form \eqref{normal-form} near $p_0$. It can be easily seen that the slow vector field associated to \eqref{normal-form} is given by \eqref{eq-SVF-example} with $\lambda=\lambda_0$, $x\sim 0$ and $x\ne 0$. defined near $p_0$. Let us focus on the $x$-component of \eqref{eq-SVF-example}: 
\begin{equation}\label{SVF-x}\frac{\dd x}{\dd\tau}=\frac{g(x,\lambda_0,0)}{f_{\lambda_0}'(x)},
\end{equation}
with $x\ne 0$ and $x\sim 0$. Since the order of the function $g(x,\lambda_0,0)$ at $x = 0$ is finite and odd (Assumption 2 and Remark \ref{remark-even}), $g(x,\lambda_0,0)$ changes sign as $x$ goes through the origin. Recall that $f_{\lambda_0}'$ has the same property (Remark \ref{remark-even}). Thus, the right-hand side of \eqref{SVF-x} is either positive for all $x\ne 0$ and $x\sim 0$ or negative for all $x\ne 0$ and $x\sim 0$. 
\end{remark}
We further assume:\\
\\
\textbf{Assumption 3} $\gamma_-\cup\gamma_+$ does not contain singularities of the slow vector field $\hat Q_{\lambda_0}$, and $\hat Q_{\lambda_0}$ points from $\gamma_-$ to $\gamma_+$.  \\
\\
Assumption 3 is natural because in Section \ref{subsection-limit cycles} and Section \ref{subsection-densities} we study ergodic properties and entry-exit probability measures related to canard orbits of $X_{\lambda,\epsilon}$ with $\lambda\sim\lambda_0$ and $\epsilon$ small and positive. Such orbits follow a portion of the attracting curve $\gamma_-$, pass close to the contact point $p_0$ and then follow the repelling curve $\gamma_+$ for a significant amount of time. 

Since $\hat Q_{\lambda_0}$ is regular on $\gamma_-\cup\gamma_+$ (Assumption 3), the slow divergence integral associated to any segment contained in $\gamma_-\cup\gamma_+$ is well-defined (see Section \ref{preliminaries-slow-fast}). As mentioned before, it is important to work with the slow divergence integral associated to segments of $\cS_{\lambda_0}$ with the property that one of their endpoints is the contact point $p_0$. The following assumption enables us to extend the slow divergence integral to $p_0$, for $\lambda=\lambda_0$ (see Remark \ref{remark-extension}):\\
\\
\textbf{Assumption 4} We assume that $\mathbbmss m< 2(\mathbbmss n-1)$.

\begin{remark}
    \label{remark-extension} Suppose that $\lambda=\lambda_0$. Let $\gamma=[q,p_0]$ be a segment contained in $ \gamma_-\cup\{p_0\}$, with one of the endpoints equal to the contact point $p_0$. Then the slow divergence integral associated to $[q,p_0]$ is defined as 
    \begin{equation}\label{eq-SDI-attr} I_-([q,p_0]):=\lim_{p\to p_0,p\in\gamma_-}I([q,p],\lambda_0)<0,
    \end{equation}
    where $I$ is defined in \eqref{SDI-classicaldef} and associated to the normally attracting segment $[q,p]\in\gamma_-$. Using Assumption 4 and \cite[Definition 5.2]{de2021canard}, $I_-([q,p_0])$ is finite. This can be easily seen if we use the normal form \eqref{normal-form} near $p_0$ (recall that \eqref{SDI-classicaldef} is invariant under smooth equivalences). We may assume that the curve of singularities $y=f_{\lambda_0}(x)$ of \eqref{normal-form} satisfies \eqref{eq-conditions-up} near $x=0$ (if not, we can apply $(x,y)\to(-x,-y)$ to \eqref{normal-form}). Let $0<x_1<x_2$ with $x_2$ small. The slow divergence integral of \eqref{normal-form} associated to the attracting segment parameterized by $x\in [x_1,x_2]$ reads as
\begin{equation}\label{SDI-intr2}I(x_1,x_2)=-\int_{x_2}^{x_1}\frac{\left(f_{\lambda_0}'(x)\right)^2}{g(x,\lambda_0,0)}\dd x<0.\nonumber 
\end{equation}
For more details we refer to \cite[Section 5.5]{de2021canard} (see also \eqref{SDI-added}). Now, from Assumption 4 it follows that the following limit is finite:
\begin{equation}\label{eq:slow_I-att}
    I_-(x_2)=\lim_{x_1\to 0^+}I(x_1,x_2)=-\int_{x_2}^{0}\frac{\left(f_{\lambda_0}'(x)\right)^2}{g(x,\lambda_0,0)}\dd x<0. 
\end{equation} The integral $I_-(x_2)$ in \eqref{eq:slow_I-att} represents the slow divergence integral associated to the segment $[0,x_2]$ contained in $y=f_{\lambda_0}(x)$ where the endpoint $x=0$ corresponds to the contact point $(x,y)=(0,0)$. Thus, $I_-([q,p_0])$ in \eqref{eq-SDI-attr} is well-defined (i.e. finite).

Similarly, if $[q,p_0]$ is a segment contained in $ \gamma_+\cup\{p_0\}$, then we define 
    \begin{equation}\label{eq-SDI-rep} I_+([q,p_0]):=\lim_{p\to p_0,p\in\gamma_+}I([q,p],\lambda_0)>0.
    \end{equation} 
    In the normal form coordinates we have 
    \begin{equation}\label{eq:slow_I-rep}
    I_+(x_1)=\lim_{x_2\to 0^-}I(x_1,x_2)=-\int_{0}^{x_1}\frac{\left(f_{\lambda_0}'(x)\right)^2}{g(x,\lambda_0,0)}\dd x>0,
\end{equation}
where $x_1<x_2<0$.
\end{remark}
We finally define the notion of slow relation function of \eqref{model} for $\lambda=\lambda_0$. 
Let $\sigma\subset M$ be a smooth closed section transverse to the fast foliation $\mathcal F_{\lambda_0}$, having the contact point $p_0$ as its endpoint (Fig. \ref{fig-ergodic-model}). We let $\sigma$ be parameterized by a regular parameter $s\in [0,s_0]$, with $s_0>0$, where $s=0$ corresponds to $p_0$, and we suppose that $\sigma\setminus\{p_0\}$ lies  
in the basin of attraction of $\gamma_-$ and, in backward time,
in the basin of attraction of $\gamma_+$. We write 
\begin{equation}
    \label{SDI-papameter-s}
    \tilde I_-(s)=I_-([\omega(s),p_0]), \  \tilde I_+(s)=I_+([\alpha(s),p_0]), \ s\in ]0,s_0],
\end{equation}
where $I_\pm([q,p_0])$ are defined in \eqref{eq-SDI-attr} and \eqref{eq-SDI-rep} and $\omega (s)\in\gamma_-$ (resp. $\alpha (s)\in \gamma_+$) is the $\omega$-limit point (resp. $\alpha$-limit point) of the orbit of $X_{\lambda_0,0}$ through $s\in \sigma $. It is clear that $\tilde I_\pm(s)\to 0$ as $s$ tends to zero, $\tilde I_-$ is strictly decreasing and smooth on $]0,s_0]$ ($\tilde I_-'(s)<0$ for $s\in ]0,s_0]$) and $\tilde I_+$ is strictly increasing and smooth on $]0,s_0]$ ($\tilde I_+'(s)>0$ for $s\in ]0,s_0]$). If we take $\tilde I_\pm(0)=0$, then the functions $\tilde I_\pm$ are continuous on the segment $[0,s_0]$.
\begin{definition}[\textbf{Slow-relation function}]\label{def-1}
Consider $X_{\lambda,\epsilon}$ defined in \eqref{model} and suppose that Assumptions 1 through 4
are satisfied.  If $$-\tilde I_-(s_0)\le \tilde I_+(s_0) \text{ (resp. } -\tilde I_-(s_0)> \tilde I_+(s_0)),$$ then $S:[0,s_0]\to [0,s_0]$, $S(0)=0$, given by
\begin{equation}
    \label{slow-relation-function}
    \tilde I_-(s)+ \tilde I_+(S(s))=0 \text{ (resp. } \tilde I_-(S(s))+ \tilde I_+(s)=0), \ s\in ]0,s_0],
\end{equation}
is well-defined and we call it \emph{the slow relation function}. 
\end{definition}
\begin{remark}\label{rem-existence}
    Let us explain why the function $S$ in Definition \ref{def-1} is well-defined (i.e. $S$ exists). Suppose that $-\tilde I_-(s_0)\le \tilde I_+(s_0)$ and take any $s\in ]0,s_0]$. Since $-\tilde I_-$ is strictly increasing and $-\tilde I_-(0)=0$, we have $0<-\tilde I_-(s)\le -\tilde I_-(s_0)$. Thus, $0=\tilde I_+(0)<-\tilde I_-(s)\le \tilde I_+(s_0)$. The function $\tilde I_+$ is continuous on the segment $[0,s_0]$, and the Intermediate-Value Theorem implies the existence of a unique number $S(s)$ in $]0,s_0]$ such that $\tilde I_+(S(s))=-\tilde I_-(s)$ (the uniqueness and $S(s)>0$ follow from the fact that $\tilde I_+$ is strictly increasing). The case where $-\tilde I_-(s_0)> \tilde I_+(s_0)$ can be treated in similar fashion as above.  
\end{remark}

Since $S(0)=0$ and $S(s)\to 0$ as $s$ tends to zero, it is clear that the slow relation function $S$ is continuous on $[0,s_0]$. Moreover, the Implicit Function Theorem, the smoothness of $\tilde I_\pm$ on the interval $]0,s_0]$ and \eqref{slow-relation-function} imply the smoothness of $S$ on the interval $]0,s_0]$. Moreover, $S'>0$.

\begin{remark}\label{remark-borel-sigma}
    In Section \ref{subsection-limit cycles} we assume that $[0,s_0]$ is a segment on $\mathbb R$ with the standard Borel $\sigma$-algebra and work with $S$-invariant Borel probability measures on $[0,s_0]$. The main results in Section \ref{subsection-limit cycles} (Theorem \ref{theorem-1}--Theorem \ref{theorem-3})
     are independent of the choice of section $\sigma$ and a regular parameter $s$ on $\sigma$. 

     In Section \ref{subsection-densities} we study connection between entry and exit probability measures and it is natural to deal with a more general definition of $S$. Instead of one section $\sigma$ we have two sections $\sigma_-$ (entry) and $\sigma_+$(exit). We refer to Fig \ref{fig-entry-exit}. 
\end{remark}

 We say that the multiplicity of a fixed point $s_1\in ]0,s_0]$ of $S$ is equal to $l$ if $s_1$ is a zero of $\tilde S(s):=s-S(s)$ of multiplicity $l$ (that is $\tilde S(s_1)=\cdots=\tilde S^{(l-1)}(s_1)=0$ and $\tilde S^{(l)}(s_1)\ne 0$). If $\tilde S^{(n)}(s_1)=0$ for each $n=0,1,\dots$, then the multiplicity of $s_1$ of $S$ is $\infty$.

 \begin{remark}
    We will often work with slow relation functions associated to slow-fast Li\'{e}nard systems \eqref{eq:sf2} satisfying \eqref{eq-conditions-up} (see Section \ref{subsection-limit cycles}, Section \ref{subsection-densities} and Section \ref{section-numerics}). In this case we can take  $\sigma\subset \{x=0\}$, parameterized by the coordinate $y\in [0,s_0]$. We denote $y$ by $s$. Then the integrals $\tilde I_\pm(s)$ in \eqref{SDI-papameter-s} become
    \begin{equation}
        \label{integrals-Lienard}
         \tilde I_-(s)=-\int_{\omega_1(s)}^{0}\frac{\left(f_{\lambda_0}'(x)\right)^2}{g(x,\lambda_0,0)}\dd x, \ \tilde I_+(s) =-\int_{0}^{\alpha_1(s)}\frac{\left(f_{\lambda_0}'(x)\right)^2}{g(x,\lambda_0,0)}\dd x,
    \end{equation}
 where $\alpha_1(s)<0$ and $\omega_1 (s)>0$ are the $x$-coordinates of the $\alpha$ and $\omega$ limits of the fast orbit through $s$ (see \eqref{eq:slow_I-att} and \eqref{eq:slow_I-rep}). We have $s=f_{\lambda_0}(\alpha_1(s))$ and $s=f_{\lambda_0}(\omega_1(s))$, and by differentiating it follows that
 $$1=f_{\lambda_0}'(\alpha_1(s))\alpha_1'(s), \qquad 1=f_{\lambda_0}'(\omega_1(s))\omega_1'(s).$$
This previous equation, together with \eqref{integrals-Lienard}, imply that
\begin{equation}
    \label{derivative-Lienard}
 \tilde I_-'(s)=\frac{f_{\lambda_0}'(\omega_1(s))}{g(\omega_1(s),\lambda_0,0)}, \qquad \tilde I_+'(s) =-\frac{f_{\lambda_0}'(\alpha_1(s))}{g(\alpha_1(s),\lambda_0,0)}.    
\end{equation}
 \end{remark}

\subsection{Invariant measures and limit cycles}\label{subsection-limit cycles}
In this section, we suppose that $X_{\lambda,\epsilon}$ in \eqref{model} satisfies Assumption 1--Assumption 4.
For each $s\in ]0,s_0]$ we define a closed curve $\Gamma_s$ at level $(\lambda,\epsilon)=(\lambda_0,0)$ consisting of the fast orbit of $X_{\lambda_0,0}$ passing through $s\in\sigma$ and the portion of the curve of singularities $\cS_{\lambda_0}$ between the $\omega$-limit point $\omega(s)\in\gamma_-$ and the $\alpha$-limit point $\alpha(s)\in\gamma_+$ of that fast orbit (see Fig. \ref{fig-motivation-introduction}(b)). We associate the following slow divergence integral to $\Gamma_s$:
\begin{equation}
    \label{SDI-final-version}
   \tilde I(s):= \tilde I_-(s)+ \tilde I_+(s),
\end{equation}
with $s\in ]0,s_0]$.
\begin{theorem}
    \label{theorem-1}
    Let $S:[0,s_0]\to [0,s_0]$ be the slow relation function defined in \eqref{slow-relation-function} and let $\tilde I$ be the slow divergence integral associated to $\Gamma_s$, defined in \eqref{SDI-final-version}. Then the following statements hold:
    \begin{enumerate}
        \item The function $\tilde I$ has no zeros in $]0,s_0]$ if and only if the slow relation function $S$ is uniquely ergodic (i.e. $S$ admits
precisely one invariant probability measure: the Dirac delta
measure $\delta_0$ at $0$). 
        \item The function $\tilde I$ has a zero at $s=s_1\in ]0,s_0]$ if and only if the Dirac delta measure $\delta_{s_1}$ at the point $s_1$ is $S$-invariant.
        \item The function $\tilde I$ has  exactly $k$ zeros $s_1<\dots <s_k$ in $]0,s_0]$ if and only if the set of all $S$-invariant probability measures on $[0,s_0]$, denoted by $\mathcal P_S$, is the convex hull of Dirac delta measures $\delta_0,\delta_{s_1},\dots,\delta_{s_k}$:
        \begin{equation}\label{convex-set}
            \mathcal P_S=\left\{\eta_0\delta_0+\sum_{i=1}^k\eta_i\delta_{s_i}: \eta_0,\,\eta_i\ge 0, \,\sum_{i=0}^k\eta_i=1 \right\}.
        \end{equation}
    \end{enumerate}
\end{theorem}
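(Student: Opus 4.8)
The plan is to reduce everything to two ingredients: a dictionary between fixed points of $S$ and zeros of $\tilde I$, and a description of the asymptotic dynamics of $S$ on $[0,s_0]$; the measure-theoretic conclusions then follow from the Poincar\'e recurrence theorem. First I would establish the dictionary. Using the defining identity \eqref{slow-relation-function} and the strict monotonicity (hence injectivity) of $\tilde I_-$ and $\tilde I_+$ on $]0,s_0]$, one checks directly that for $s_1\in\,]0,s_0]$ one has $S(s_1)=s_1$ if and only if $\tilde I_-(s_1)+\tilde I_+(s_1)=\tilde I(s_1)=0$: in the first case of Definition \ref{def-1} this is $\tilde I_+(S(s_1))=-\tilde I_-(s_1)$ combined with injectivity of $\tilde I_+$, and in the second case it is symmetric, with $\tilde I_-$ in place of $\tilde I_+$. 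Together with $S(0)=0$ this identifies $\mathrm{Fix}(S)$ with $\{0\}\cup\{\text{zeros of }\tilde I\text{ in }]0,s_0]\}$. Since a Dirac mass $\delta_x$ is $S$-invariant exactly when $x$ is a fixed point of $S$ (recalled in Section \ref{sec-ergodic}), statement (2) is immediate, and it also reduces (1) and (3) to statements about the finite set $\mathrm{Fix}(S)$.

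Next I would analyse the dynamics of $S$. Recall that $S$ is continuous on $[0,s_0]$ with $S(0)=0$, $S'>0$ on $]0,s_0]$ and $S([0,s_0])\subseteq[0,s_0]$, so $S$ is a strictly increasing self-map of $[0,s_0]$. Assume $\tilde I$ has finitely many zeros $0<s_1<\dots<s_k\le s_0$ in $]0,s_0]$ (the case relevant to (1), with $k=0$, and to (3)). Then $\mathrm{Fix}(S)=\{0,s_1,\dots,s_k\}$ is finite and closed, and its complement in $[0,s_0]$ is a finite union of intervals $J_0=\,]0,s_1[\,,\dots,J_{k-1}=\,]s_{k-1},s_k[$ together with $J_k=\,]s_k,s_0]$ when $s_k<s_0$. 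On each such gap interval $J$ the continuous map $s\mapsto S(s)-s$ does not vanish, hence has constant sign on $J$ by the intermediate value theorem; since $S$ is increasing and fixes the endpoints, $S(J)\subseteq J$; and on $J_k$ that sign is necessarily negative, since otherwise $S(s_0)>s_0$. I would then conclude that for every $s$ lying in a gap interval the forward orbit $(S^n(s))_{n\ge 0}$ is monotone, stays in that interval, and therefore converges; by continuity its limit is a fixed point of $S$, which must be the corresponding endpoint of the interval and in particular differs from $s$. Consequently the only $S$-recurrent points of $[0,s_0]$ are the points of $\mathrm{Fix}(S)$.

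Finally I would invoke the Poincar\'e recurrence theorem: any $S$-invariant Borel probability measure $\mu$ is supported on its set of recurrent points, hence on the finite set $\mathrm{Fix}(S)$, so $\mu=\eta_0\delta_0+\sum_{i=1}^k\eta_i\delta_{s_i}$ with $\eta_i=\mu(\{s_i\})\ge 0$ and $\sum_{i=0}^k\eta_i=1$; conversely every such convex combination is $S$-invariant because $0$ and each $s_i$ are fixed points of $S$. For $k=0$ this gives $\mathcal P_S=\{\delta_0\}$, i.e. unique ergodicity of $S$, and the converse direction of (1) follows since a zero of $\tilde I$ at some $s_1$ would yield an invariant measure $\delta_{s_1}\neq\delta_0$. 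For general $k$ this is precisely \eqref{convex-set}, and its converse direction follows because Dirac masses at distinct points are affinely independent, so $\mathcal P_S$ being the convex hull of $\delta_0,\delta_{s_1},\dots,\delta_{s_k}$ forces $\tilde I$ to have no zeros other than $s_1,\dots,s_k$. The step I expect to require the most care is the dynamical description in the second paragraph — checking that $S$ leaves every gap interval invariant and that orbits converge monotonically to a fixed endpoint, with the attendant bookkeeping for the rightmost interval $]s_k,s_0]$, for the two alternatives in Definition \ref{def-1}, and for the borderline case $s_k=s_0$ — since the entire measure-theoretic argument rests on the implication ``recurrent $\Rightarrow$ fixed''.
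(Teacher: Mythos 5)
Your proposal is correct and follows essentially the same route as the paper's proof: the fixed points of $S$ are identified with the zeros of $\tilde I$ via the defining relation \eqref{slow-relation-function} and the strict monotonicity of $\tilde I_\pm$, the monotonicity of $S$ forces every forward orbit to converge to a fixed point so that recurrent points coincide with fixed points, and the Poincar\'e recurrence theorem then pins every invariant measure on the finite fixed-point set. Your gap-interval bookkeeping is a slightly more elaborate way of obtaining the same ``recurrent $\Rightarrow$ fixed'' step that the paper gets directly from the monotone convergence of $S^n(s)$.
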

We prove Theorem \ref{theorem-1} in Section \ref{section-proof-1}. We point out that $k$ in Theorem \ref{theorem-1}.3 is the arithmetic number of zeros of $\tilde I$, i.e. the zeros of $\tilde I$ counted without their multiplicity. Notice that $\delta_0,\delta_{s_1},\dots,\delta_{s_k}$ from Theorem \ref{theorem-1}.3 are  ergodic probability measures (they are the extremal
points of the convex set $\mathcal P_S$ in \eqref{convex-set}). See also \cite[Proposition 4.3.2]{viana2016foundations}. In the proof of Theorem \ref{theorem-1}.1 and Theorem \ref{theorem-1}.3 we use an important result in ergodic theory, the Poincar\'{e} recurrence theorem \cite{katok,viana2016foundations}.

\rev{ We point out that the study of zeros of $\tilde I$ is relevant since the zeros provide candidates for limit cycles (for more details see Theorem \ref{theorem-2} and Theorem \ref{theorem-3} and their proof).}

\begin{example}\label{example-vanderpol} Consider the slow-fast Van der Pol system 
\rev{
\begin{equation}\label{example-classical-Lienard}
    X_{\lambda,\epsilon}:\begin{cases}
        \dot{x}=y-\frac{1}{2}x^2-\frac{1}{3}x^3\\
        \dot{y} = \epsilon \left (\lambda-x\right),
    \end{cases}
\end{equation}
}
where $\lambda\sim 0$ ($\lambda_0=0$). The slow relation function associated with the slow-fast system \eqref{example-classical-Lienard} is uniquely ergodic. Indeed, for $\epsilon=\lambda=0$, we consider the normally attracting branch $\gamma_-=\{y=\frac{1}{2}x^2+\frac{1}{3}x^3\}\cap \{x>0\}$, the normally repelling branch $\gamma_+=\{y=\frac{1}{2}x^2+\frac{1}{3}x^3\}\cap \{-1<x<0\}$ and the contact point $p_0$ at $(x,y)=(0,0)$. Note that \eqref{example-classical-Lienard} is a special case of \eqref{eq:sf2}. We take $s=y\in [0,s_0]$, where $s_0\in ]0,\frac{1}{6}[$ is arbitrary and fixed. Using \eqref{integrals-Lienard}, the slow divergence integral in \eqref{SDI-final-version} can be written as 
$$\tilde I(s)=-\int_{\alpha_1(s)}^{\omega_1 (s)}x(1+x)^2\dd x, \qquad s\in ]0,s_0]. $$
 Since $\tilde I(s)<0$ for all $s\in ]0,s_0]$ (see \cite{1996} or \cite[Section 5.7]{de2021canard}), Theorem \ref{theorem-1}.1 implies that the slow relation function $S:[0,s_0]\to [0,s_0]$ is uniquely ergodic.

We call the contact point $p_0$ in \eqref{example-classical-Lienard} a slow-fast Hopf point (see below).\end{example}

\begin{example}
 Consider \eqref{eq:sf2} with $f_\lambda(x)=x^\mathbbmss n$ and $g(x,\lambda,\epsilon)=-x^\mathbbmss m$, where $\mathbbmss n\ge 2$ is even, $\mathbbmss m\ge 1$ is odd and $\mathbbmss m< 2(\mathbbmss n-1)$. Since the function $f_\lambda$ is even (i.e. the curve of singularities is symmetric w.r.t. the $y$-axis) and the function $x\mapsto \frac{\left(f_{\lambda}'(x)\right)^2}{g(x,\lambda,0)}$ is odd, the slow relation function $S$ is the identity map and the slow divergence integral $\tilde I$ is identically zero. In this case, each probability measure is $S$-invariant, and ergodic probability measures are given by Dirac delta measures. 
\end{example}
For slow-fast Li\'{e}nard equations with arbitrary number of zeros of the associated slow divergence integral we refer to e.g. \cite{SDICLE1}.

\smallskip

Assume that the contact point $p_0$ in Assumption 1 is of Morse type (this means that the contact order of $p_0$ is $2$) and that the singularity order of $p_0$ is $1$. If the slow vector field $\hat Q_{\lambda_0}$, defined in Section \ref{subsection-model}, points from the attracting  branch $\gamma_-$ to the repelling branch $\gamma_+$, then we say that $X_{\lambda,\epsilon}$ has a slow-fast Hopf point at $p_0$ for $\lambda=\lambda_0$ (sometimes called generic turning point). See e.g. \cite{de2021canard,kuehn2015multiple}. When $p_0$ is a slow-fast Hopf point, then $\Gamma_s$ (often called a canard cycle) can produce limit cycles after perturbation. More precisely, we say that the cyclicity of the canard cycle $\Gamma_s$ is bounded by $N\in\mathbb{N}_0$ if there exist $\epsilon_0>0$, $\delta_0>0$ and a neighborhood $\mathcal V$ of $\lambda_0$ in the $\lambda$-space such that $X_{\lambda,\epsilon}$ has at most $N$ limit cycles lying within Hausdorff
distance $\delta_0$ of $\Gamma_s$ for each $(\lambda,\epsilon)\in \mathcal V\times [0,\epsilon_0]$. The smallest $N$ with this property is called the cyclicity of $\Gamma_s$. We denote by $\cycl(X_{\lambda,\epsilon},\Gamma_s)$ the cyclicity of $\Gamma_s$. We have

\begin{theorem}
    \label{theorem-2}
    Suppose that $X_{\lambda,\epsilon}$ has a slow-fast Hopf point at $p_0$ for $\lambda=\lambda_0$. Let $S:[0,s_0]\to [0,s_0]$ be the slow relation function from Definition \ref{def-1}, associated to $X_{\lambda,\epsilon}$. The following statements are true.
    \begin{enumerate}
        \item If $S$ is uniquely ergodic, then $\cycl(X_{\lambda,\epsilon},\Gamma_s)\le 1$ for each fixed $s\in]0,s_0]$. The limit cycle, if it exists Hausdorff close to $\Gamma_s$, is hyperbolic and attracting (resp. repelling) if $\tilde I(s)<0$ (resp. $>0$).
        \item If the Dirac delta measure $\delta_{s_1}$ is $S$-invariant for some $s_1\in ]0,s_0]$, then $s=s_1$ is a fixed point of $S$ of multiplicity $1\le l\le \infty$, and $\cycl(X_{\lambda,\epsilon},\Gamma_{s_1})\le l+1$ if $l<\infty$.
    \end{enumerate}

\end{theorem}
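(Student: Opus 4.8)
The plan is to reduce Theorem~\ref{theorem-2} to the cyclicity theory of canard cycles at slow-fast Hopf points developed in \cite[Chapter~5]{de2021canard} (see also \cite{DM-entryexit}), with Theorem~\ref{theorem-1} serving as the dictionary between the ergodic hypotheses on $S$ and the zero set of $\tilde I$. The external input I will invoke is the standard slow-divergence-integral cyclicity bound: when $p_0$ is a slow-fast Hopf point (so $\mathbbmss n=2$, $\mathbbmss m=1$, and Assumption~4 holds automatically since $\mathbbmss m<2(\mathbbmss n-1)=2$), the canard cycle $\Gamma_s$ has cyclicity at most $1$ whenever $\tilde I(s)\neq 0$, and in that case the limit cycle near $\Gamma_s$, if it exists, is hyperbolic, attracting for $\tilde I(s)<0$ and repelling for $\tilde I(s)>0$; moreover, if $s=s_1$ is a zero of $\tilde I$ of finite multiplicity $l$, then $\cycl(X_{\lambda,\epsilon},\Gamma_{s_1})\le l+1$. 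These facts are proven by blow-up near the turning point and I will not reprove them.

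Part~(1) is then immediate: if $S$ is uniquely ergodic, Theorem~\ref{theorem-1}.1 says $\tilde I$ has no zeros on $]0,s_0]$, hence $\tilde I(s)\neq 0$ for the fixed $s$, and the quoted bound gives $\cycl(X_{\lambda,\epsilon},\Gamma_s)\le 1$ together with the stated hyperbolicity and stability dichotomy.

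For part~(2), if $\delta_{s_1}$ is $S$-invariant then, as recalled in Section~\ref{sec-ergodic}, $s_1$ is a fixed point of $S$, so its multiplicity $l$ is at least $1$, and by Theorem~\ref{theorem-1}.2 we have $\tilde I(s_1)=0$. The one point requiring an argument is that the multiplicity of $s_1$ \emph{as a fixed point of $S$} coincides with its multiplicity \emph{as a zero of $\tilde I$}. In the case $-\tilde I_-(s_0)\le\tilde I_+(s_0)$ of Definition~\ref{def-1}, the relation $\tilde I_-(s)+\tilde I_+(S(s))=0$ yields $\tilde I(s)=\tilde I_-(s)+\tilde I_+(s)=\tilde I_+(s)-\tilde I_+(S(s))$. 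Introducing the divided difference $\psi(u,v):=\big(\tilde I_+(u)-\tilde I_+(v)\big)/(u-v)$ for $u\ne v$ and $\psi(u,u):=\tilde I_+'(u)$, which is smooth because $\tilde I_+$ is smooth on $]0,s_0]$, we obtain $\tilde I(s)=\tilde S(s)\,\psi(s,S(s))$ with $\psi(s_1,S(s_1))=\tilde I_+'(s_1)>0$; since $S$ is smooth near $s_1\in]0,s_0]$ (Implicit Function Theorem applied to \eqref{slow-relation-function}), the factor $s\mapsto\psi(s,S(s))$ is smooth and nonzero at $s_1$, so $\tilde I$ and $\tilde S=s-S$ vanish to exactly the same order $l$ at $s_1$. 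The case $-\tilde I_-(s_0)>\tilde I_+(s_0)$ is identical, with $\tilde I_-$ (having $\tilde I_-'(s_1)<0$) in place of $\tilde I_+$, via $\tilde I(s)=\tilde I_-(s)-\tilde I_-(S(s))$. Hence when $l<\infty$ the canard cycle bound gives $\cycl(X_{\lambda,\epsilon},\Gamma_{s_1})\le l+1$.

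The genuinely hard analysis — the cyclicity bounds themselves — is external to this argument, and within the present reduction the only delicate points are (i) matching the two notions of multiplicity, which is the divided-difference computation above, and (ii) keeping track of which branch of Definition~\ref{def-1} is active; both are routine once $\tilde I_\pm$ are known to be smooth on $]0,s_0]$ and $S$ smooth there. I do not expect any serious obstacle beyond citing the turning-point cyclicity results correctly.
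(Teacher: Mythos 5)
Your proposal is correct and follows essentially the same route as the paper's proof: part (1) reduces via Theorem \ref{theorem-1}.1 to the absence of zeros of $\tilde I$ and then cites the standard cyclicity bound for canard cycles at a slow-fast Hopf point, and part (2) uses exactly the divided-difference factorization $\tilde I(s)=\Psi(s,S(s))\,(s-S(s))$ (which the paper establishes in the preamble of its proofs section) to identify the multiplicity of $s_1$ as a fixed point of $S$ with its multiplicity as a zero of $\tilde I$, before invoking the bound $\cycl(X_{\lambda,\epsilon},\Gamma_{s_1})\le l+1$. The only cosmetic difference is that you spell out the second branch of Definition \ref{def-1} explicitly, which the paper dismisses as analogous.
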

Theorem \ref{theorem-2} will be proved in Section \ref{section-proof-2}.

\smallskip

Theorem \ref{theorem-3} below deals with the following generalization of slow-fast Hopf points:
\rev{
\begin{equation}\label{system-non-generic}
    \begin{cases}
        \dot{x}=y-x^{2\mathbbmss n_1}\tilde f(x)\\
        \dot{y} = \epsilon \left (\lambda-x^{2\mathbbmss n_1-1}\right),
    \end{cases}
\end{equation}
}
where $\tilde f$ is smooth, $\tilde f(0)>0$, $\mathbbmss n_1\ge 1$ and $\lambda\sim 0\in\mathbb R$ ($\lambda_0=0$). The Li\'{e}nard equation \eqref{system-non-generic} is of type \eqref{eq:sf2} with $f_\lambda (x)=x^{2\mathbbmss n_1}\tilde f(x)$ and $g(x,\lambda,\epsilon)=\lambda-x^{2\mathbbmss n_1-1}$. For $\lambda=0$, the origin $(x,y)=(0,0)$ is a contact point with even contact order $2\mathbbmss n_1$ and odd singularity order $2\mathbbmss n_1-1$. It is clear that Assumption 2 and Assumption 4 are satisfied. When $\mathbbmss n_1=1$ (resp. $\mathbbmss n_1>1$), $(x,y)=(0,0)$ is a slow-fast Hopf point or generic turning point (resp. a non-generic turning point). We suppose that 
\begin{equation}\label{assump-non-generic}
f_{\lambda_0}'(x)>0 \text{ for all } x>0, \ f_{\lambda_0}'(x)<0 \text{ for all } x<0.
\end{equation}
From \eqref{assump-non-generic} and $g(x,0,0)=-x^{2\mathbbmss n_1-1}$ it follows that Assumption 1, with $\gamma_-=\{y=x^{2\mathbbmss n_1}\}\cap \{x>0\}$ and $\gamma_+=\{y=x^{2\mathbbmss n_1}\}\cap \{x<0\}$, and Assumption 3 are satisfied. We define the slow relation function $S:[0,s_0]\to [0,s_0]$ of \eqref{system-non-generic} using \eqref{slow-relation-function}.

\begin{theorem}
    \label{theorem-3} Consider \eqref{system-non-generic} with a fixed $\mathbbmss n_1\ge 1$. If the set of all $S$-invariant probability measures is given by \eqref{convex-set} for some $0<s_1<\dots <s_k<s_0$, then $s_1,\dots,s_k$ are fixed points of $S$. If they all have multiplicity $1$ (i.e. they are hyperbolic) and if we take $s_k<s_{k+1}\le s_0$, then there exists a continuous function $\lambda_*(\epsilon)$, with ${\lambda}_*(0)=0$, such that the Li\'{e}nard family \eqref{system-non-generic} with $\lambda=\lambda_*(\epsilon)$ has $k+1$ periodic orbits ${O}_1^\epsilon,\dots,{O}_{k+1}^\epsilon$, for each $\epsilon\sim 0$ and $\epsilon>0$. The periodic orbit ${O}_i^\epsilon$ is isolated, hyperbolic and close to $\Gamma_{s_i}$ in Hausdorff sense, for each $i=1,\dots, k+1$.
\end{theorem}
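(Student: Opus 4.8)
The plan is to deduce Theorem~\ref{theorem-3} from Theorem~\ref{theorem-1} together with the cyclicity/bifurcation machinery for canard cycles developed in \cite{de2021canard}. First I would invoke Theorem~\ref{theorem-1}: the hypothesis that $\mathcal P_S$ is the convex hull of $\delta_0,\delta_{s_1},\dots,\delta_{s_k}$ is, by Theorem~\ref{theorem-1}.3, equivalent to $\tilde I$ having exactly the $k$ zeros $s_1<\dots<s_k$ in $]0,s_0]$. By Theorem~\ref{theorem-1}.2 each $\delta_{s_i}$ being $S$-invariant is equivalent to $s_i$ being a fixed point of $S$, which gives the first assertion immediately. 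The hyperbolicity hypothesis says each $s_i$ is a fixed point of multiplicity $1$, i.e.\ $\tilde S'(s_i)\ne 0$ where $\tilde S(s)=s-S(s)$; since $\tilde I_\pm$ are monotone and smooth, one checks via \eqref{slow-relation-function} that this is the same as $\tilde I'(s_i)\ne 0$, so each $s_i$ is a \emph{simple} zero of $\tilde I$.

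Next I would set up the bifurcation argument. For the Li\'enard family \eqref{system-non-generic}, the contact point at the origin is a slow-fast Hopf point (when $\mathbbmss n_1=1$) or a non-generic turning point (when $\mathbbmss n_1>1$), and in either case the relevant blow-up analysis and the role of the slow divergence integral in counting limit cycles Hausdorff-close to a canard cycle $\Gamma_s$ are exactly those treated in \cite[Chapter~5 and the chapters on turning points]{de2021canard}; here I would cite the precise statement that, for a breaking parameter $\lambda$ near $\lambda_0=0$, simple zeros of the slow divergence integral persist as hyperbolic limit cycles. Concretely, I would fix a closed interval $[s_1-\eta,s_{k+1}]\subset\,]0,s_0]$ containing all the $s_i$ and the extra point $s_{k+1}$, and consider the return map (or its associated displacement/bifurcation function) of \eqref{system-non-generic} on a section, which depends on $(s,\lambda,\epsilon)$ and whose zero set in $s$ for fixed $(\lambda,\epsilon)$ gives the limit cycles. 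The key input from \cite{de2021canard} is that as $\epsilon\to 0$ this bifurcation function, suitably rescaled, converges (in an appropriate $C^k$ sense after the standard division/normalization) to a function whose simple zeros are precisely the simple zeros of $\tilde I$ — hence near each simple zero $s_i$ of $\tilde I$ there is, for all small $\epsilon>0$ and $\lambda$ near $0$, exactly one transversal zero of the bifurcation function, i.e.\ one hyperbolic limit cycle $O_i^\epsilon$ close to $\Gamma_{s_i}$.

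To get the $(k+1)$-st periodic orbit $O_{k+1}^\epsilon$ close to $\Gamma_{s_{k+1}}$ — where $s_{k+1}$ need not be a zero of $\tilde I$ — I would use $\lambda$ as an extra breaking parameter: by the classical theory of canard cycles through a turning point with one breaking parameter, moving $\lambda$ through a value $\lambda_*(\epsilon)$ (with $\lambda_*(0)=0$, continuous in $\epsilon$) creates a ``maximal'' canard-type limit cycle, and one chooses $\lambda_*(\epsilon)$ precisely so that this cycle sits Hausdorff-close to $\Gamma_{s_{k+1}}$; because the $s_i$ are simple zeros and bounded away from $s_{k+1}$, turning on this small additional $\lambda$-displacement does not destroy the $k$ hyperbolic cycles $O_1^\epsilon,\dots,O_k^\epsilon$ produced above (hyperbolicity is an open condition and the perturbation in $\lambda$ is $O(\epsilon)$-controlled). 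Collecting $O_1^\epsilon,\dots,O_{k+1}^\epsilon$ gives the claimed $k+1$ isolated hyperbolic periodic orbits.

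The main obstacle I anticipate is the bookkeeping of the bifurcation function near the turning point: one must combine the slow divergence integral's simple zeros (which control the ``tunnel'' part of the dynamics) with the single breaking-parameter mechanism near $p_0$ (which controls the creation of the extra, $s_{k+1}$-close cycle), and verify that the two effects can be realized simultaneously for a single choice of $\lambda=\lambda_*(\epsilon)$ without the $O(\epsilon)$ shift in $\lambda$ perturbing the already-located hyperbolic cycles off their positions. This is where I would lean most heavily on the existing estimates in \cite{de2021canard} for the return map near generic and non-generic turning points, rather than re-deriving them; the remaining work is checking that the hypotheses of those results (finite odd singularity order $2\mathbbmss n_1-1$, even contact order $2\mathbbmss n_1$, Assumption~4, and simplicity of the zeros of $\tilde I$) are met, which follows from the explicit form of \eqref{system-non-generic} and \eqref{assump-non-generic}.
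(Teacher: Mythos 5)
Your first step --- using Theorem \ref{theorem-1}.2--3 to identify $s_1,\dots,s_k$ as the fixed points of $S$ and hence (via the factorisation $\tilde I(s)=\Psi(s,S(s))(s-S(s))$ with $\Psi>0$) as the zeros of $\tilde I$, with hyperbolicity of the fixed points translating into simplicity of the zeros, and noting $\tilde I(s_{k+1})\ne 0$ --- is exactly the reduction the paper performs. At that point the paper simply invokes \cite[Theorem 2]{SDICLE1}, which is precisely the statement that $k$ simple zeros of the slow divergence integral together with one further level $s_{k+1}$ where $\tilde I\ne 0$ yield, along a suitable control curve $\lambda_*(\epsilon)$, $k+1$ hyperbolic limit cycles Hausdorff-close to $\Gamma_{s_1},\dots,\Gamma_{s_{k+1}}$. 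You instead sketch the interior of that cited theorem, which is a legitimate alternative presentation but is where an inaccuracy creeps in.

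The slip is in your second paragraph: you assert that near each simple zero $s_i$ of $\tilde I$ there is, \emph{for all small $\epsilon>0$ and all $\lambda$ near $0$}, one hyperbolic limit cycle close to $\Gamma_{s_i}$. That is not how canard limit cycles arise. For a generic $\lambda$ near $0$ the canard connection is broken and no limit cycle exists Hausdorff-close to any $\Gamma_s$ with $s>0$; limit cycles near these canard cycles occur only when $\lambda$ lies in an exponentially thin wedge around the control curve. Consequently one cannot first produce the $k$ cycles for arbitrary $\lambda$ and then ``turn on'' a small $\lambda$-displacement to create the $(k+1)$-st without destroying them: all $k+1$ cycles must be produced simultaneously by the single choice $\lambda=\lambda_*(\epsilon)$, with the $k$ tunnel cycles located at the simple zeros of $\tilde I$ via the fixed points of the resulting return map and the extra cycle coming from the breaking mechanism at the level $s_{k+1}$. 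This is exactly the content of \cite[Theorem 2]{SDICLE1}, so your conclusion is reachable, but the logical order in your bifurcation sketch (cycles first, control curve second) would not survive being made rigorous and should be inverted.
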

Theorem \ref{theorem-3} will be proved in Section \ref{section-proof-3}.

\subsection{Entry and exit measures}\label{subsection-densities}
In this section we deal with Borel probability measures on $\mathbb R$ with the usual Borel $\sigma$-algebra $\mathcal B$. The measures will be supported on bounded Borel sets $L, T, \dots$ (see below). Roughly speaking, the main result of this section, Theorem \ref{mainthm-weak}, gives an answer to following natural questions: if an $\epsilon$-family of entry measures ($\epsilon$ is the singular perturbation parameter) is convergent when $\epsilon\to 0$, is the $\epsilon$-family of exit measures convergent when $\epsilon\to 0$, and, if the exit limit exists, how do we read the exit limit from the entry limit and slow relation function? 

\begin{figure}[htb]
	\begin{center}
		\includegraphics[width=13.8cm,height=5.3cm]{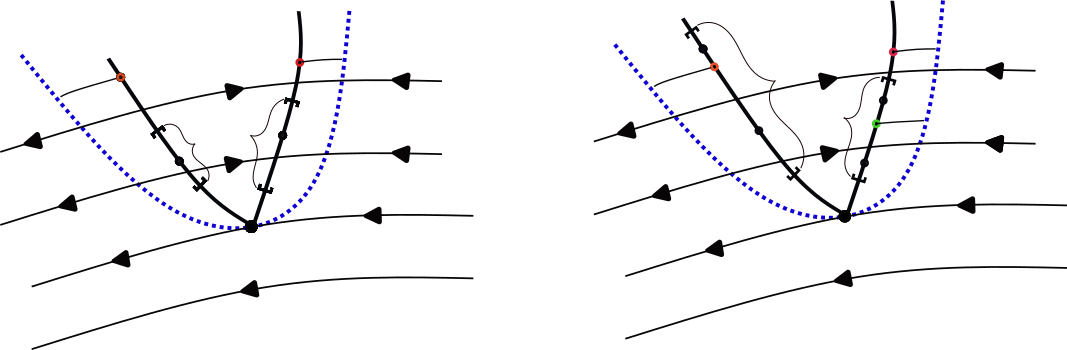}
  {\footnotesize
\put(-44,135){\large $\gamma_-$}
\put(-66,152){\large $\sigma_-$}
\put(-145,146){\large $\sigma_+$}
\put(-172,118){\large $\gamma_+$}
\put(-83,47){\large $p_0$}
\put(-91,97){\large $L$}
\put(-71,89){\large {$s_b^-$}}
\put(-73,76){\large{$s_1^-$}}
\put(-127,90){\large{$s_1^+$}}
\put(-66,105){\large{$s_2^-$}}
\put(-148,124){\large{$s_2^+$}}
\put(-128,120){\large{$s_c^+$}}
\put(-77,125){\large{$s_c^-$}}
\put(-107,114){\large $T$}
\put(-261,132){\large $\gamma_-$}
\put(-285,148){\large $\sigma_-$}
\put(-356,129){\large $\sigma_+$}
\put(-391,115){\large $\gamma_+$}
\put(-299,44){\large $p_0$}
\put(-309,88){\large $L$}
\put(-320,87){\large $T$}
\put(-346,116){\large{$s_c^+$}}
\put(-296,120){\large{$s_c^-$}}
\put(-286,90){\large{$s_1^-$}}
\put(-341,76){\large{$s_1^+$}}
\put(-332,-5){\large (a) $-\tilde I_-(s_c^-)\le \tilde I_+(s_c^+)$}
\put(-128,-5){\large (b) $-\tilde I_-(s_c^-)>\tilde I_+(s_c^+)$}
  }
		         \end{center} 
	\caption{Entry section  $\sigma_-$, exit section $\sigma_+$ and definition of slow relation function $S_0:L\to T$.}
	\label{fig-entry-exit}
\end{figure}

We consider $X_{\lambda,\epsilon}$ defined in \eqref{model} and suppose that it satisfies Assumption 1--Assumption 4. Instead of one section $\sigma$ (Section \ref{subsection-limit cycles}) we now define two sections $\sigma_-$ and $\sigma_+$, transverse to the fast foliation $\mathcal F_{\lambda_0}$. We refer to Fig. \ref{fig-entry-exit}. We parameterize $\sigma_\pm$ by a regular parameter $s^\pm\in [0,s_0^\pm]$, where $s^\pm=0$ corresponds to the contact point $p_0$. The section $\sigma_-\setminus\{p_0\}$ lies  
in the basin of attraction of $\gamma_-$ and the section $\sigma_+\setminus\{p_0\}$, in backward time,
in the basin of attraction of $\gamma_+$.
\smallskip

Again we can define slow divergence integrals $\tilde I_-(s^-)<0$ and $\tilde I_+(s^+)>0$ (see \eqref{SDI-papameter-s}). We take a point on $\sigma_-$ given by $s_c^-\in ]0,s_0^-[$ and a point on $\sigma_+$ given by $s_c^+\in ]0,s_0^+[$. We distinguish between two cases:
\begin{itemize}
    \item[(a)]{$-\tilde I_-(s_c^-)\le \tilde I_+(s_c^+)$.} For any segment $L$ contained in $]0,s_c^-[$, we can define a smooth and increasing slow relation function $S_0:L\to T=S_0(L)\subset ]0,s_c^+[$ by $\tilde I_-(s^-)+ \tilde I_+(S_0(s^-))=0$, $s^-\in L$. The proof that $S_0$ is well-defined is similar to the proof that $S$ is well-defined, given in Remark \ref{rem-existence}. See Fig. \ref{fig-entry-exit}(a). If $\sigma_-=\sigma_+$, $s^-=s^+$ and $s_c^-=s_c^+$, then $S_0$ is the slow relation function $S$ from Definition \ref{def-1}, defined on $[0,s_c^-]$.
    \item[(b)]{$-\tilde I_-(s_c^-)>\tilde I_+(s_c^+)$.} In this case there exists a unique $s_b^-\in ]0,s_c^-[$ such that $\tilde I_-(s_b^-)+ \tilde I_+(s_c^+)=0$ ($s_b^-$ is called a buffer point \cite[Section 7.4]{de2021canard}). For $s_1^-\in ]0,s_b^-[$, there is a unique $s_1^+\in ]0,s_c^+[$ such that $\tilde I_-(s_1^-)+ \tilde I_+(s_1^+)=0$. For $s_2^-\in ]s_b^-,s_c^-[$, there is a unique $s_2^+\in ]s_c^+,s_0^+]$ such that $\tilde I_-(s_2^-)+ \tilde I_+(s_2^+)=0$ (at least for $s_2^-$ close to $s_b^-$). We use a similar argument as in Remark \ref{rem-existence}. For a segment $L$ contained in $]0,s_c^-[$ and with $s_b^-$ in its interior, we consider (smooth and increasing) slow relation function $S_0:L\to T=S_0(L)\subset ]0,s_0^+]$ again defined by $\tilde I_-(s^-)+ \tilde I_+(S_0(s^-))=0$, $s^-\in L$. Clearly, $S_0(s_b^-)=s_c^+$. See Fig. \ref{fig-entry-exit}(b).   
\end{itemize}
\smallskip

In the case when a Borel probability measure $\mu_0$ (supported on $L$) has a density, then it is often important (see Section \ref{section-numerics}) to compute a density of the push-forward probability measure $\mu_0S_0^{-1}$.  It is well-known (see e.g. \cite[Section 3.2]{Lasota} or \cite[Theorem 11.8]{Sarapa}) that, if $D_{en}:\mathbb R\to \mathbb R$ is a density, supported on an interval $\overline L$, and $G:\mathbb R\to \mathbb R$ a Borel function such that $G:\overline L\to \overline T=G(\overline L)$ is bijective, $G^{-1}$ has a continuous derivative on $\overline T$ and $\frac{d}{ds}G^{-1}(s)\ne 0$ for all $s\in \overline T$, then $D_{en}$ is transformed by $G$ into a new density 
\begin{equation}\label{formula-density-trans}
   D_{ex}(s)= D_{en}(G^{-1}(s))\left | \frac{d}{ds}G^{-1}(s) \right |1_{\overline T}(s),
\end{equation} 
\rev{where $1_{\overline T}$ is the characteristic function of the set $\overline T$.}


\begin{proposition}\label{prop-Frob}
  Let $S_0:L\to T$ be a slow relation function defined in (a) or (b) and let $D_{en}:\mathbb R\to \mathbb R$ be an entry density supported on $L$. Then $D_{en}$ is transformed by $S_0$ into the following exit density supported on $T$:
  \begin{equation}\label{exit-formula-general}
      D_{ex}(s^+)=
                      -D_{en}(S_0^{-1}(s^+))\frac{\tilde I_+'(s^+)}{\tilde I_-'(S_0^{-1}(s^+))}1_T(s^+).
\end{equation}
  \end{proposition}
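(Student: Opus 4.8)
The plan is to deduce \eqref{exit-formula-general} directly from the general transformation rule \eqref{formula-density-trans} applied with $G=S_0$. First I would record the relevant regularity facts. Since $L$ is a compact segment contained in $]0,s_c^-[$ (in case (b) with $s_b^-$ in its interior), it is bounded away from the contact point $p_0$, so $\tilde I_-$ is smooth on a neighborhood of $L$ with $\tilde I_-'<0$ there, and $\tilde I_+$ is smooth on a neighborhood of $T=S_0(L)\subset\,]0,s_0^+]$ with $\tilde I_+'>0$ there (Section~\ref{subsection-model}; in case (b) this includes the buffer point $s_b^-$, where $S_0(s_b^-)=s_c^+$ and $S_0$ passes smoothly from $]0,s_c^+[$ into $]s_c^+,s_0^+]$). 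In particular $\overline L=L$ and $\overline T=T$, so $1_{\overline T}=1_T$.

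Next I would establish the differentiability of $S_0^{-1}$ together with a formula for its derivative. Differentiating the defining identity $\tilde I_-(s^-)+\tilde I_+(S_0(s^-))=0$ and using $\tilde I_+'\ne 0$ gives (as already noted after Definition~\ref{def-1}) that $S_0$ is smooth on $L$ with
\[
  S_0'(s^-)=-\frac{\tilde I_-'(s^-)}{\tilde I_+'(S_0(s^-))}>0,
\]
so $S_0:L\to T$ is an increasing bijection. Consequently $S_0^{-1}$ is differentiable on $T$ with continuous derivative
\[
  \frac{\dd}{\dd s^+}S_0^{-1}(s^+)=\frac{1}{S_0'\big(S_0^{-1}(s^+)\big)}=-\frac{\tilde I_+'(s^+)}{\tilde I_-'\big(S_0^{-1}(s^+)\big)},
\]
which is strictly positive since $\tilde I_+'>0$ and $\tilde I_-'<0$; in particular it never vanishes on $T$. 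This verifies all hypotheses required to invoke \eqref{formula-density-trans} with $G=S_0$ and entry density $D_{en}$ supported on $\overline L=L$.

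Finally, substituting into \eqref{formula-density-trans} and dropping the absolute value (the derivative above being already positive), I obtain
\[
  D_{ex}(s^+)=D_{en}\big(S_0^{-1}(s^+)\big)\left(-\frac{\tilde I_+'(s^+)}{\tilde I_-'\big(S_0^{-1}(s^+)\big)}\right)1_T(s^+),
\]
which is exactly \eqref{exit-formula-general}. There is no substantial obstacle in this argument; the only point needing a little care is the bookkeeping that keeps us inside the region where $\tilde I_\pm$ are smooth with non-vanishing derivatives, in particular the passage through the buffer point $s_b^-$ in the interior of $L$ in case (b), where one must check $\tilde I_+'$ stays strictly positive as $S_0(s^-)$ crosses $s_c^+$.
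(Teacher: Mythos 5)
Your proposal is correct and follows essentially the same route as the paper's proof: apply the change-of-variables formula \eqref{formula-density-trans} with $G=S_0$, compute $\frac{\dd}{\dd s^+}S_0^{-1}(s^+)$ by implicit differentiation of $\tilde I_-(s^-)+\tilde I_+(S_0(s^-))=0$, and drop the absolute value because the derivative is positive. You simply spell out the regularity bookkeeping that the paper leaves implicit.
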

   \begin{proof}
Proposition \ref{prop-Frob} follows from \eqref{formula-density-trans} by taking $G=S_0$. Note that $S_0$ and $S_0^{-1}$ are smooth and increasing and that we can compute $\frac{d}{ds^+}S_0^{-1}(s^+)$ by using $\tilde I_-(s^-)+ \tilde I_+(S_0(s^-))=0$. \end{proof} 
\rev{
\begin{remark}
For a uniform entry density, the first factor in \eqref{exit-formula-general} is a constant. More precisely, if $D_{en}(s^-)=\frac{1}{|L|} 1_L(s^-)$, where $|L|$ denotes the length of the segment $L$, then \eqref{exit-formula-general} becomes 
$$D_{ex}(s^+)=
                      -\frac{1}{|L|}\frac{\tilde I_+'(s^+)}{\tilde I_-'(S_0^{-1}(s^+))}1_T(s^+).$$
 We use this formula in Example \ref{example-num-Pol} in Section \ref{section-numerics} when we compute exit densities for the van der Pol equation.                   
\end{remark}
}

    We can apply Proposition \ref{prop-Frob} to find exit densities in slow-fast Li\'{e}nard family \eqref{eq:sf2}. We can take $s^\pm$ to be the coordinate $y$.
\begin{corollary}
    \label{corollary-density}
    Suppose that \eqref{eq:sf2} satisfies Assumption 1--Assumption 4. Let $S_0:L\to T$ be a slow relation function associated to \eqref{eq:sf2} and let $D_{en}:\mathbb R\to \mathbb R$ be an entry density supported on $L$. Then $D_{en}$ is transformed by $S_0$ into 
    \begin{equation}\label{Frobenius-slow-fast-Lienard}
      D_{ex}(s^+)
      =                   D_{en}(S_0^{-1}(s^+))\frac{f_{\lambda_0}'(\alpha_1(s^+))g(\omega_1(S_0^{-1}
                      (s^+)),\lambda_0,0)}{f_{\lambda_0}'(\omega_1(S_0^{-1}
                     (s^+)))g(\alpha_1(s^+),\lambda_0,0)}1_T(s^+). \end{equation}
 
\end{corollary}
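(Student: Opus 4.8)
The plan is to derive Corollary \ref{corollary-density} as a direct specialization of Proposition \ref{prop-Frob} to the Li\'enard form \eqref{eq:sf2}, so that the whole proof reduces to one substitution together with a sign check. First I would check that the hypotheses of Proposition \ref{prop-Frob} are in force: since \eqref{eq:sf2} is assumed to satisfy Assumptions 1--4, a slow relation function $S_0\colon L\to T$ of type (a) or (b) is well-defined and, as recorded in Section \ref{subsection-densities}, is smooth and strictly increasing with smooth strictly increasing inverse. Moreover one may take $\sigma_\pm\subset\{x=0\}$ parameterized by the coordinate $s^\pm=y$, which is a regular parameter vanishing at the contact point $p_0=(0,0)$. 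Hence Proposition \ref{prop-Frob} applies verbatim and yields
\[
   D_{ex}(s^+)=-D_{en}\bigl(S_0^{-1}(s^+)\bigr)\,\frac{\tilde I_+'(s^+)}{\tilde I_-'(S_0^{-1}(s^+))}\,1_T(s^+),
\]
which is \eqref{exit-formula-general}.

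Next I would insert the explicit expressions for $\tilde I_\pm'$ valid for Li\'enard systems. On $\sigma_-$ the (horizontal) fast orbit through $(0,s^-)$ meets $\gamma_-$ at $x=\omega_1(s^-)>0$ with $s^-=f_{\lambda_0}(\omega_1(s^-))$, and on $\sigma_+$ it meets $\gamma_+$ at $x=\alpha_1(s^+)<0$ with $s^+=f_{\lambda_0}(\alpha_1(s^+))$; these are exactly the functions entering \eqref{integrals-Lienard}. Differentiating \eqref{integrals-Lienard} and using $1=f_{\lambda_0}'(\omega_1(s))\omega_1'(s)$ and $1=f_{\lambda_0}'(\alpha_1(s))\alpha_1'(s)$ gives \eqref{derivative-Lienard}, i.e.\ $\tilde I_-'(s)=f_{\lambda_0}'(\omega_1(s))/g(\omega_1(s),\lambda_0,0)$ and $\tilde I_+'(s)=-f_{\lambda_0}'(\alpha_1(s))/g(\alpha_1(s),\lambda_0,0)$. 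Although \eqref{derivative-Lienard} was stated in the single-section setting of Section \ref{subsection-model}, the computation is local to each branch of $\cS_{\lambda_0}$ and carries over unchanged to the two sections $\sigma_\pm$. Substituting these two identities into the displayed formula, the overall minus sign cancels the minus sign coming from $\tilde I_+'$, and dividing by $\tilde I_-'$ replaces $1/\tilde I_-'$ by $g(\omega_1(\cdot),\lambda_0,0)/f_{\lambda_0}'(\omega_1(\cdot))$, producing exactly \eqref{Frobenius-slow-fast-Lienard}.

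I do not expect a genuine obstacle: this is a corollary in the literal sense. The only points that require a moment of care are the bookkeeping of the three sign sources (the overall $-$ in \eqref{exit-formula-general}, the $-$ in $\tilde I_+'$, and the fact that $\tilde I_-'<0$ while $\tilde I_+'>0$), and keeping straight that $\omega_1$ is evaluated at the pre-image $S_0^{-1}(s^+)\in L$ while $\alpha_1$ is evaluated at the exit point $s^+\in T$. Once these matchings are respected the simplification is mechanical and gives the claimed exit density supported on $T$.
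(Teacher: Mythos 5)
Your proposal is correct and follows exactly the paper's route: the paper's proof of Corollary \ref{corollary-density} is precisely the substitution of \eqref{derivative-Lienard} into \eqref{exit-formula-general}, and your sign bookkeeping (the overall minus cancelling the minus in $\tilde I_+'$, with $\omega_1$ evaluated at $S_0^{-1}(s^+)$ and $\alpha_1$ at $s^+$) reproduces \eqref{Frobenius-slow-fast-Lienard} as claimed.
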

\begin{proof}
Expression \eqref{Frobenius-slow-fast-Lienard} follows directly from \eqref{derivative-Lienard} and \eqref{exit-formula-general}.\end{proof}

In the rest of this section we focus on 
\rev{
\begin{equation}\label{system-non-generic-measure}
    \begin{cases}
        \dot{x}=y-x^{2\mathbbmss n_1}\tilde f(x)\\
        \dot{y} = \tilde\epsilon^{2\mathbbmss n_1} \left (\tilde\epsilon^{2\mathbbmss n_1-1}\tilde \lambda-x^{2\mathbbmss n_1-1}\right),
    \end{cases}
\end{equation}
}
where $0\leq \tilde\epsilon\ll1$ is a new singular perturbation parameter, $\tilde\lambda\sim 0$ and $\tilde f$ is smooth with $\tilde f(0)>0$. Suppose that   Assumption 1--Assumption 4 are satisfied. Note that \eqref{system-non-generic-measure} is \eqref{system-non-generic} with $(\epsilon,\lambda)=(\tilde\epsilon^{2\mathbbmss n_1},\tilde\epsilon^{2\mathbbmss n_1-1}\tilde \lambda)$. For the sake of simplicity, we state Theorem \ref{mainthm-weak} for system \eqref{system-non-generic-measure} (the same result can be proved in a more general framework \cite{DM-entryexit}).

Following \cite[Theorem 7.7]{de2021canard} or \cite{DM-entryexit}, there exists a smooth curve $\tilde \lambda=\tilde\lambda_c(\tilde \epsilon)$
such that for every $\tilde\epsilon>0$ system \eqref{system-non-generic-measure}, with $\tilde \lambda=\tilde\lambda_c(\tilde \epsilon)$, has an orbit connecting $s_c^-\in \sigma_-$ with $s_c^+\in \sigma_+$. $\tilde \lambda=\tilde\lambda_c(\tilde \epsilon)$ is sometimes called a control curve. We denote by $S_{\tilde\epsilon}$, $\tilde\epsilon>0$, the transition map of \eqref{system-non-generic-measure}, with $\tilde \lambda=\tilde\lambda_c(\tilde \epsilon)$, from $L$ to $\sigma_+$. Clearly, $S_{\tilde\epsilon}:L\to S_{\tilde\epsilon}(L)$ is a smooth diffeomorphism and $S_{\tilde\epsilon}'>0$, due to the chosen parameterization of $\sigma_\pm$. The following result is a direct consequence of \cite[Proposition 7.1]{de2021canard} and \cite[Theorem 7]{DM-entryexit} (see also \cite[Section 3]{Dbalanced}).
\begin{proposition}\label{prop:slowrel}
Let $S_0:L\to T$ be a slow relation function associated to \eqref{system-non-generic-measure} and let $\tilde \lambda=\tilde\lambda_c(\tilde \epsilon)$ be a control curve as above. The following statements are true.
\begin{itemize}
    \item[(a)] If $-\tilde I_-(s_c^-)\le \tilde I_+(s_c^+)$, then for $\tilde\epsilon>0$ small enough the orbit through $s_1^-\in L$ (tunnel behavior) of system \eqref{system-non-generic-measure}, with $\tilde \lambda=\tilde\lambda_c(\tilde \epsilon)$, intersects $\sigma_+$ in positive time at $$s^+=S_{\tilde\epsilon}(s_1^-)=s^+_1+o(1), \ \tilde \epsilon\to 0,$$
    where $s_1^+=S_0(s_1^-)$ and $o(1)$ tends to $0$ as $\tilde \epsilon\to 0$, uniformly in $L$. 
    \item[(b)] If $-\tilde I_-(s_c^-)>\tilde I_+(s_c^+)$, then for $\tilde\epsilon>0$ small enough the orbit through $s_1^-\in L\cap ]-\infty,s_b^-[$ (tunnel behavior) (resp. $s_2^-\in L\cap ]s_b^-,+\infty[$ (funnel behavior)) of system \eqref{system-non-generic-measure}, with $\tilde \lambda=\tilde\lambda_c(\tilde \epsilon)$, intersects $\sigma_+$ in positive time at $$s^+=S_{\tilde\epsilon}(s_1^-)=s^+_1+o(1), \ \tilde \epsilon\to 0,$$
    where $s_1^+=S_0(s_1^-)$ and $o(1)$ tends to $0$ as $\tilde \epsilon\to 0$, uniformly in any compact subset of $L\cap ]-\infty,s_b^-[$  (resp. $s^+=S_{\tilde\epsilon}(s_2^-)=s^+_c+o(1)$, $\tilde \epsilon\to 0$).
\end{itemize}
\end{proposition}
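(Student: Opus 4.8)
The plan is to reduce Proposition~\ref{prop:slowrel} to the entry--exit asymptotics of \cite[Proposition 7.1]{de2021canard} and \cite[Theorem 7]{DM-entryexit} (see also \cite[Section 3]{Dbalanced}), so that the only genuine work is (i) checking that \eqref{system-non-generic-measure} fits their framework and (ii) the bookkeeping of which entry points fall into the tunnel region and which into the funnel region. First I would record that \eqref{system-non-generic-measure} is precisely \eqref{system-non-generic} after the substitution $(\epsilon,\lambda)=(\tilde\epsilon^{2\mathbbmss n_1},\tilde\epsilon^{2\mathbbmss n_1-1}\tilde\lambda)$, which is exactly the rescaling used to desingularize a contact point of even contact order $2\mathbbmss n_1$ and odd singularity order $2\mathbbmss n_1-1$ (cf. the blow-up in \cite[Chapter 7]{de2021canard}); together with Assumptions 1--4 and \eqref{assump-non-generic} this places us in the hypotheses of the cited results, with $\tilde\epsilon$ as the new small parameter. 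The control curve $\tilde\lambda=\tilde\lambda_c(\tilde\epsilon)$ is given, and by construction the canard on it connecting $s_c^-$ to $s_c^+$ is the one whose slow divergence balance is $\tilde I_-(s_c^-)+\tilde I_+(s_c^+)\le 0$ in case (a) and $>0$ in case (b), consistent with the dichotomy set up just before the statement.

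The main transition estimate I would then invoke states that, for $\tilde\lambda=\tilde\lambda_c(\tilde\epsilon)$, the passage map $S_{\tilde\epsilon}$ along canard orbits from $\sigma_-$ to $\sigma_+$ satisfies, in the limit $\tilde\epsilon\to0$, the relation $\tilde I_-(s^-)+\tilde I_+(S_{\tilde\epsilon}(s^-))=o(1)$; wherever this equation has a solution with $S_{\tilde\epsilon}(s^-)$ bounded away from the buffer value $s_c^+$, the cited results give $S_{\tilde\epsilon}(s^-)=S_0(s^-)+o(1)$, uniformly on compact subsets of the corresponding entry set. All the substantive analysis (blow-up at the turning point, matching of the slow passage along the attracting and repelling branches) lives in that estimate and is imported wholesale.

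For part (a), $L\subset\,]0,s_c^-[$ and, since $\tilde I_-$ is strictly decreasing with $\tilde I_-(0)=0$, we have $0<-\tilde I_-(s^-)<-\tilde I_-(s_c^-)\le\tilde I_+(s_c^+)$ for every $s^-\in L$; the Intermediate Value Theorem (exactly as in Remark~\ref{rem-existence}) then gives $S_0(s^-)\in\,]0,s_c^+[$, so all of $L$ lies in the tunnel region and the estimate yields $S_{\tilde\epsilon}(s^-)=s_1^++o(1)$ with $s_1^+=S_0(s_1^-)$, uniformly on the compact set $L$. For part (b) I would split $L$ at the buffer point $s_b^-$, defined by $\tilde I_-(s_b^-)+\tilde I_+(s_c^+)=0$. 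On $L\cap\,]{-\infty},s_b^-[$ the same monotonicity gives $-\tilde I_-(s^-)<\tilde I_+(s_c^+)$, hence $S_0(s^-)\in\,]0,s_c^+[$ (tunnel) and the estimate applies uniformly on compact subsets. On $L\cap\,]s_b^-,+\infty[$ (funnel) the orbit stays near the repelling branch at least up to the point lying over $s_c^+$ and is then forced off --- the buffer-point mechanism of \cite[Section 7.4]{de2021canard} --- so $S_{\tilde\epsilon}(s_2^-)=s_c^++o(1)$; this is the non-uniform regime, which is why the statement only asserts a pointwise conclusion there (and why $S_0$ need only be defined for $s_2^-$ near $s_b^-$). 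Combining the two pieces gives the Proposition.

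I expect the main obstacle to be packaging rather than new mathematics: one must verify that the hypotheses of \cite[Proposition 7.1]{de2021canard} and \cite[Theorem 7]{DM-entryexit} hold verbatim after the $(\epsilon,\lambda)\mapsto(\tilde\epsilon^{2\mathbbmss n_1},\tilde\epsilon^{2\mathbbmss n_1-1}\tilde\lambda)$ rescaling --- in particular that the sections $\sigma_\pm$ with their parameters $s^\pm$ are admissible there, and that the ``control curve'' of those references coincides with $\tilde\lambda_c$ --- and one must be careful about the uniformity: uniform on all of $L$ in (a), but only on compact subsets of $L\cap\,]{-\infty},s_b^-[$ in (b), because the tunnel asymptotics degenerate as the entry point approaches $s_b^-$, where the exit value transitions from the tunnel branch $S_0$ to the buffer value $s_c^+$.
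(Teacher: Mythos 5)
Your proposal takes essentially the same route as the paper, which gives no separate proof at all and simply declares the proposition a direct consequence of \cite[Proposition 7.1]{de2021canard} and \cite[Theorem 7]{DM-entryexit}; your added bookkeeping (verifying the rescaling $(\epsilon,\lambda)=(\tilde\epsilon^{2\mathbbmss n_1},\tilde\epsilon^{2\mathbbmss n_1-1}\tilde\lambda)$ fits the cited framework, splitting $L$ at the buffer point, and tracking where the uniformity holds) is exactly the content the paper leaves implicit. One small slip: in your parenthetical on the control curve the signs are reversed --- case (a) $-\tilde I_-(s_c^-)\le\tilde I_+(s_c^+)$ means $\tilde I_-(s_c^-)+\tilde I_+(s_c^+)\ge 0$, and case (b) gives $<0$ --- though your subsequent case analysis uses the correct inequalities.
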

Following Proposition \ref{prop:slowrel}, in the tunnel region the transition map $S_{\tilde\epsilon}$ is a small ${\tilde\epsilon}$-perturbation of the slow relation function $S_0$, while in the funnel region $S_{\tilde\epsilon}$ is close to the constant $s^+_c$. In Proposition \ref{prop:slowrel}(b) these two regions are separated by the buffer point $s_b^-$. 

If $\mu_{\tilde\epsilon},\mu_0$ are probability measures supported on $L$, then $\mu_{\tilde\epsilon}S_{\tilde\epsilon}^{-1},\mu_0S_0^{-1}$ denote push-forward probability measures of $\mu_{\tilde\epsilon},\mu_0$. Notice that $\mu_{\tilde\epsilon}S_{\tilde\epsilon}^{-1},\mu_0S_0^{-1}$ are supported on $S_{\tilde\epsilon}(L),S_0(L)=T$. Assume that $\mu_{\tilde\epsilon}$ converges weakly to $\mu_0$ as $\tilde \epsilon\to 0$. In the first case (Fig. \ref{fig-entry-exit}(a)), we show that $\mu_{\tilde\epsilon}S_{\tilde\epsilon}^{-1}$ converges weakly to $\mu_0S_0^{-1}$ as $\tilde \epsilon\to 0$ (see Theorem \ref{mainthm-weak}(a) below). In the second case (Fig. \ref{fig-entry-exit}(b)), $\mu_{\tilde\epsilon}S_{\tilde\epsilon}^{-1}$ converges weakly to $\mu_0\widetilde S_0^{-1}$ as $\tilde \epsilon\to 0$, where $\widetilde S_0:L\to \widetilde S_0(L)=T\cap ]-\infty,s_c^+]$ is a continuous function defined by 
    \begin{equation}\label{mixture-measure-push-forward}
    \widetilde S_0(s^-)=\begin{cases}
        S_0(s^-), \ \ s^-\in L\cap ]-\infty,s_b^-[, \\
        s_c^+, \qquad \  s^-\in L\cap [s_b^-,+\infty[.
    \end{cases}
    \end{equation}
    We refer to Theorem \ref{mainthm-weak}(b). Notice that the function $\widetilde S_0$ is equal to the slow relation function $S_0$ below the buffer point $s_b^-$ (in the tunnel region) and equal to the constant $s_c^+$ above the buffer point $s_b^-$ (in the funnel region). The push-forward probability measure $\mu_0\widetilde S_0^{-1}$ of $\mu_0$ under $\widetilde S_0$ is supported on $T\cap ]-\infty,s_c^+]$.
\begin{theorem}\label{mainthm-weak}
    Let $S_0:L\to T$ be a slow relation function associated to \eqref{system-non-generic-measure}. Let $\mu_{\tilde\epsilon},\mu_0$ be Borel probability measures supported on $L$. The following statements hold.
\begin{itemize}
    \item[(a)] If $-\tilde I_-(s_c^-)\le \tilde I_+(s_c^+)$ and if $\mu_{\tilde\epsilon}$ converges weakly to $\mu_0$ as $\tilde \epsilon\to 0$, then $\mu_{\tilde\epsilon}S_{\tilde\epsilon}^{-1}$ converges weakly to $\mu_0S_0^{-1}$ as $\tilde \epsilon\to 0$.
    \item[(b)] If $-\tilde I_-(s_c^-)>\tilde I_+(s_c^+)$ and if $\mu_{\tilde\epsilon}$ converges weakly to $\mu_0$ as $\tilde \epsilon\to 0$, then $\mu_{\tilde\epsilon}S_{\tilde\epsilon}^{-1}$ converges weakly to $\mu_0\widetilde S_0^{-1}$, as $\tilde \epsilon\to 0$. 
\end{itemize}
\end{theorem}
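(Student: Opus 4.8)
The plan is to combine two ingredients: the uniform asymptotics $S_{\tilde\epsilon}=\widehat S_0+o(1)$ furnished by Proposition~\ref{prop:slowrel} — where $\widehat S_0:=S_0$ in case~(a) and $\widehat S_0:=\widetilde S_0$ in case~(b) — and the fact recalled in Section~\ref{sec-ergodic} that weak convergence is preserved under continuous maps. First extend $\widehat S_0$ to a bounded continuous map $\mathbb R\to\mathbb R$ (its values already lie in the compact parameter interval $K:=[0,s_0^+]$ of $\sigma_+$; extend it by constants outside the segment $L$) and extend each $S_{\tilde\epsilon}$ to a Borel map $\mathbb R\to\mathbb R$ in an arbitrary way outside $L$; this is harmless since $\mu_{\tilde\epsilon},\mu_0$ are supported on $L$. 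Because $\widehat S_0$ is continuous and $\mu_{\tilde\epsilon}$ converges weakly to $\mu_0$, the push-forwards $\mu_{\tilde\epsilon}\widehat S_0^{-1}$ converge weakly to $\mu_0\widehat S_0^{-1}$. Hence it suffices to prove that for every bounded continuous $\chi:\mathbb R\to\mathbb R$,
\[
\Big|\int_{\mathbb R}\chi\,d\big(\mu_{\tilde\epsilon}S_{\tilde\epsilon}^{-1}\big)-\int_{\mathbb R}\chi\,d\big(\mu_{\tilde\epsilon}\widehat S_0^{-1}\big)\Big|
=\Big|\int_L\big(\chi\circ S_{\tilde\epsilon}-\chi\circ\widehat S_0\big)\,d\mu_{\tilde\epsilon}\Big|\longrightarrow 0\qquad(\tilde\epsilon\to0),
\]
where I used the change-of-variables formula for push-forward measures together with the support hypothesis.

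The left-hand side is bounded by $\sup_{s^-\in L}\big|\chi(S_{\tilde\epsilon}(s^-))-\chi(\widehat S_0(s^-))\big|$, and since $\chi$ is uniformly continuous on the fixed compact set $K$ (which contains all the values $S_{\tilde\epsilon}(s^-)$ and $\widehat S_0(s^-)$), this supremum tends to $0$ as soon as
\[
\|S_{\tilde\epsilon}-\widehat S_0\|_{L^\infty(L)}:=\sup_{s^-\in L}\big|S_{\tilde\epsilon}(s^-)-\widehat S_0(s^-)\big|\longrightarrow 0\qquad(\tilde\epsilon\to0).
\]
In case~(a) this uniform convergence is precisely Proposition~\ref{prop:slowrel}(a), and the proof is complete. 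In case~(b) it is the main obstacle: Proposition~\ref{prop:slowrel}(b) only gives $S_{\tilde\epsilon}\to S_0$ uniformly on compact subsets of $L\cap\,]-\infty,s_b^-[$ and $S_{\tilde\epsilon}\to s_c^+$ uniformly on compact subsets of $L\cap\,]s_b^-,+\infty[$, that is, pointwise convergence $S_{\tilde\epsilon}\to\widetilde S_0$ away from the buffer point $s_b^-$, with no a priori control near $s_b^-$.

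To bridge this gap I would exploit that each $S_{\tilde\epsilon}$ is increasing (recall $S_{\tilde\epsilon}'>0$), together with continuity of $\widetilde S_0$ and the identity $\widetilde S_0(s_b^-)=S_0(s_b^-)=s_c^+$. Fix $\rho>0$. Choose $\delta>0$, small enough that $[s_b^--\delta,s_b^-+\delta]\subset L$, with $s_c^+-S_0(s_b^--\delta)<\rho/2$ (possible by continuity of $S_0$ at $s_b^-$), and split $L$ into $L_1=L\cap\,]-\infty,s_b^--\delta]$, $L_2=[s_b^--\delta,s_b^-+\delta]$ and $L_3=L\cap[s_b^-+\delta,+\infty[$. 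On $L_1$ and on $L_3$, Proposition~\ref{prop:slowrel}(b) yields $\sup_{L_1}|S_{\tilde\epsilon}-\widetilde S_0|\to0$ and $\sup_{L_3}|S_{\tilde\epsilon}-\widetilde S_0|\to0$, since there $\widetilde S_0$ equals $S_0$, respectively the constant $s_c^+$. On $L_2$, monotonicity of $S_{\tilde\epsilon}$ confines $S_{\tilde\epsilon}(L_2)$ to $[S_{\tilde\epsilon}(s_b^--\delta),S_{\tilde\epsilon}(s_b^-+\delta)]$, an interval whose endpoints converge to $S_0(s_b^--\delta)$ and $s_c^+$ as $\tilde\epsilon\to0$, while $\widetilde S_0(L_2)\subseteq[S_0(s_b^--\delta),s_c^+]$; hence for $\tilde\epsilon$ small enough both $S_{\tilde\epsilon}(s^-)$ and $\widetilde S_0(s^-)$ lie, for every $s^-\in L_2$, in an interval of length $<\rho$. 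Taking the maximum over $L_1,L_2,L_3$ gives $\|S_{\tilde\epsilon}-\widetilde S_0\|_{L^\infty(L)}<\rho$ for all sufficiently small $\tilde\epsilon$, which is the required uniform convergence. The only genuinely delicate point is this last estimate across the buffer point; everything else is soft measure theory.
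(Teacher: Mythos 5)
Your proposal is correct, and its first half --- reducing weak convergence of the push-forwards to uniform convergence of $S_{\tilde\epsilon}$ on $L$ via the change-of-variables formula for push-forwards and uniform continuity of $\chi$ on a fixed compact set containing all ranges --- is exactly the paper's argument; in case (a) your proof and the paper's coincide in substance. Where you genuinely diverge is in establishing the uniform convergence $S_{\tilde\epsilon}\to\widetilde S_0$ on $L$ in case (b). The paper cuts $L$ at $s_b^--\tilde\varrho_2$ into two pieces and controls the upper piece by (i) applying Proposition \ref{prop:slowrel}(a) to the time-reversed system to get $S_{\tilde\epsilon}^{-1}(s_c^+-\tfrac{\tilde\varrho_1}{2})\to S_0^{-1}(s_c^+-\tfrac{\tilde\varrho_1}{2})<s_b^-$, and (ii) invoking the connecting orbit through $s_c^-$ to obtain the a priori bound $S_{\tilde\epsilon}(s^-)<s_c^+$ on all of $L$; monotonicity then traps $S_{\tilde\epsilon}$ in $]s_c^+-\tfrac{\tilde\varrho_1}{2},s_c^+[$ above the cut. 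You instead use a three-piece decomposition and sandwich $S_{\tilde\epsilon}$ on $[s_b^--\delta,s_b^-+\delta]$ between its values at the two endpoints, which converge pointwise to $S_0(s_b^--\delta)$ and to $s_c^+$ by the tunnel and funnel clauses of Proposition \ref{prop:slowrel}(b); this avoids both the time-reversal trick and the appeal to the connecting orbit. Two small caveats: your assertion $\sup_{L_3}|S_{\tilde\epsilon}-\widetilde S_0|\to 0$ reads a uniformity into the funnel clause that the proposition states only for the tunnel region, but it follows from the same monotone sandwich applied between $s_b^-+\delta$ and the right endpoint of $L$; and without the connecting-orbit bound your $S_{\tilde\epsilon}$ may overshoot $s_c^+$ by $o(1)$, which your estimate on the interval of length $<\rho$ correctly absorbs. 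Both routes rest on $S_{\tilde\epsilon}'>0$; yours is somewhat more elementary, while the paper's yields the sharper one-sided information $S_{\tilde\epsilon}<s_c^+$.
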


Using \eqref{mixture-measure-push-forward} it can be easily seen that $\mu_0\widetilde S_0^{-1}$ from Theorem \ref{mainthm-weak}(b) can be written as 
    \begin{equation}\label{mixture-measure}
    \mu_0\widetilde S_0^{-1}(\cdot)=\mu_0S_0^{-1}\left(\cdot\cap T_b\right)+\mu_0\left([s_b^-,+\infty[\right)\delta_{s_c^+}(\cdot),
    \end{equation} where $T_b:=T\cap ]-\infty,s_c^+[$.
 The first term in \eqref{mixture-measure} comes from the tunnel behaviour and the second from the funnel behaviour (see Section \ref{proof-thmweak} and Proposition \ref{prop:slowrel}(b)). If $\mu_0$ is supported on $L\cap ]-\infty,s_b^-[$ (below the buffer point $s_b^-$, in the tunnel region), then the measure in \eqref{mixture-measure} is equal to $\mu_0S_0^{-1}$, similarly to Theorem \ref{mainthm-weak}(a) where we also have the tunnel behaviour. If $\mu_0$ is supported on $L\cap [s_b^-,+\infty[$ (above the buffer point $s_b^-$, in the funnel region), then \eqref{mixture-measure} is a Dirac delta measure $\delta_{s_c^+}$. 

Theorem \ref{mainthm-weak} will be proved in Section \ref{proof-thmweak}. We know that weak convergence is preserved by continuous mappings (see Section \ref{sec-ergodic}). This property cannot be used directly because mappings $S_{\tilde\epsilon}$ depend on the singular parameter $\tilde\epsilon$. To prove Theorem \ref{mainthm-weak}(a) (resp. Theorem \ref{mainthm-weak}(b)), we will need uniform convergence of $S_{\tilde\epsilon}$ to $S_{0}$ (resp. to $\widetilde S_0$), as $\tilde \epsilon\to 0$. For more details we refer to Section \ref{proof-thmweak}.

\section{Numerical results}\label{section-numerics}

In this section, we present two numerical examples that illustrate the results presented in Section \ref{subsection-densities}. \rev{These numerical simulations are performed in Mathematica \cite{Mathematica} which, by default, uses the LSODA integration method \cite{petzold1983automatic}. We recall that the numerical integration of singularly perturbed problems is highly delicate \cite{hairer1996solving}, and in some cases, discretizations may even change the behavior of canards \cite{engel2022discretized,engel2020extended}. That is why, regarding the numerical integration, we use for all simulations a \texttt{MaxStepSize} of $\frac{1}{100}$ and a \texttt{PrecisionGoal}\footnote{That is, the number of effective digits of precision for the numerical computations.} of $50$, which we found to be enough for the numerical result to be in accordance to the theory presented above. Furthermore, we emphasize that although the initial conditions are randomly generated (via a random distribution; see more details below), the plots we show below are representative of at least $10$ distinct simulation runs. Regarding the histograms, the bin sizes are automatically set to show $10$ bins. Any further detail is mentioned when relevant.}  

The first example concerns the van der Pol equation, and we show the entry-exit behaviour for the cases $-\tilde I_-(s_c^-)\le \tilde I_+(s_c^+)$ and $-\tilde I_-(s_c^-)> \tilde I_+(s_c^+)$. In particular, we compute numerically the exit density (for $\epsilon=0$ via \eqref{Frobenius-slow-fast-Lienard}, and for $\epsilon>0$ small from numerical integration) provided that the entry density is from a uniform distribution, and compare the effect of lowering $\epsilon$. See Example \ref{example-num-Pol} below.

The second example deals with a non-generic Li\'enard equation \eqref{system-non-generic} (or equivalently \eqref{system-non-generic-measure}), and shows the entry-exit relation, and densities, for a truncated Cauchy entry density (see Example \ref{example-num-non-gen}).

\begin{example}\label{example-num-Pol}
Consider the van der Pol equation \eqref{example-classical-Lienard}. We present below numerical simulation showing the relationship between entry and exit densities of uniformly distributed initial conditions. We present the simulations for two values of the singular parameter $\epsilon$ showcasing the behaviour as $\epsilon\to0$.

\begin{description}
    \item[a)  $-\tilde I_-(s_c^-)\le \tilde I_+(s_c^+)$] for this case we choose $s_c^-=\frac{1}{20}$ and $s_c^+=\frac{1}{10}$ with $\epsilon=\frac{1}{100}$ and $\epsilon=\frac{1}{200}$ giving a corresponding value of the parameter $\tilde\lambda\approx-\frac{231}{20000}$ and $\tilde\lambda\approx-\frac{107135}{20000000}$.  This parameter gives the red orbit that connects $s_c^-$ with $s_c^+$ via an orbit for the particular chosen value of $\epsilon$, see the phase-portraits of figures \ref{fig:vdp_1_1} and \ref{fig:vdp_1_2}. For both sets of simulations, some initial conditions are chosen uniformly along the section $\sigma^-$, parametrized by the $y-$coordinate and within the interval $s\in[s_1^--\delta,s_c^-]$ with $s_1^-=\frac{1}{30}$ and $\delta=\frac{1}{150}$. The corresponding orbits are numerically computed until they arrive to the exit section $\sigma^+$, blue orbits in the phase portrait of figures \ref{fig:vdp_1_1} and \ref{fig:vdp_1_2}.  The corresponding entry and exit densities, the latter given by \eqref{Frobenius-slow-fast-Lienard}, are numerically computed and shown in the right of side of the figures. We recall that such densities correspond to the singular case $\epsilon=0$. Alongside these densities, we numerically compute a histogram of the exit coordinates of the orbits of the phase portrait (also shown on the right of the figures). This histogram corresponds to the distribution of the orbits as they cross $\sigma^+$. By comparing figures \ref{fig:vdp_1_1} and \ref{fig:vdp_1_2}, notice that as $\epsilon$ decreases, the histogram resembles more the exit distribution $D_{ex}$ (see Theorem \ref{mainthm-weak}(a)).

\begin{figure}[htbp]\centering
    \centering
    \begin{tikzpicture}
    \node at (-2,0){
        \includegraphics[]{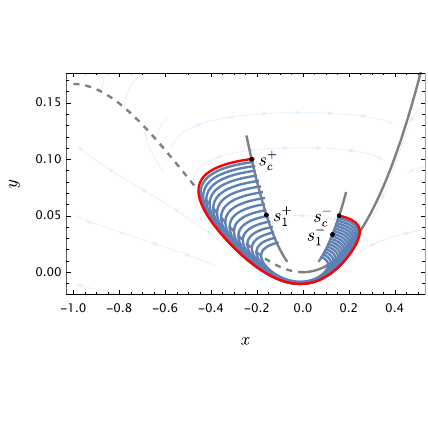}
    };
    \node at (-1.5,1.5){\footnotesize$\sigma^+$};
    \node at (0.2,0.5){\footnotesize$\sigma^-$};

    \node at (4,3){
        \includegraphics[]{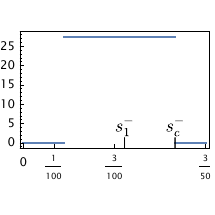}
    };
    
    \node at (4,0){
        \includegraphics[]{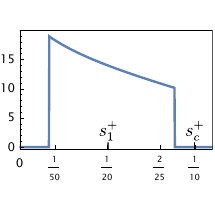}
    };
    
    \node at (4,-3){
        \includegraphics[]{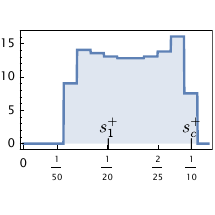}
    };
    \node at (4,4.5){\footnotesize$D_{en}(s)$};
    \node at (4,1.5){\footnotesize$D_{ex}(s)$};
    \node at (4,-1.5){\footnotesize{Exit Histogram}};
    \node at (-1.5,2.75){$\epsilon=\frac{1}{100},\;\tilde\lambda\approx-\frac{231}{20000}$};
    \end{tikzpicture}
        \caption{Numerical simulation for the case $-\tilde I_-(s_c^-)< \tilde I_+(s_c^+)$ with $\epsilon=\frac{1}{100}$. The left panel shows a phase-portrait highlighting in red the orbit for $\tilde\lambda\approx-\frac{231}{20000}$. The right panels show the entry distribution (top), exit distribution (middle), and a histogram of the exit points of the orbits crossing $\sigma^+$. The horizontal coordinate of all the right panels is the height ($y$-component) of points along the sections $\sigma^\pm$.}
    \label{fig:vdp_1_1}
\end{figure}
\begin{figure}[htbp]\centering
    \begin{tikzpicture}
    \node at (-2,0){
        \includegraphics[]{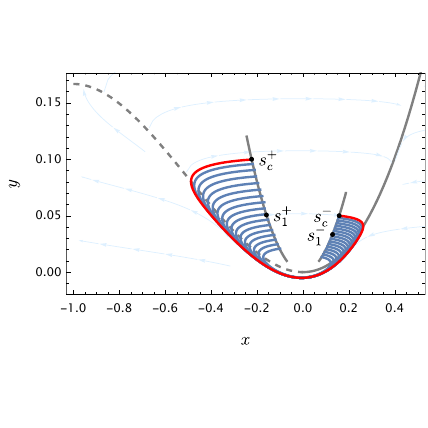}
    };
    \node at (-1.5,1.5){\footnotesize$\sigma^+$};
    \node at (0.2,0.5){\footnotesize$\sigma^-$};

    \node at (4,3){
        \includegraphics[]{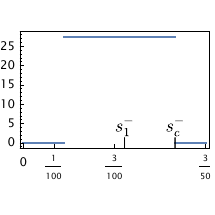}
    };
    
    \node at (4,0){
        \includegraphics[]{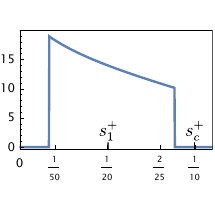}
    };
    
    \node at (4,-3){
        \includegraphics[]{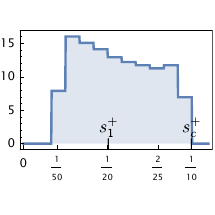}
    };
    \node at (4,4.5){\footnotesize$D_{en}(s)$};
    \node at (4,1.5){\footnotesize$D_{ex}(s)$};
    \node at (4,-1.5){\footnotesize{Exit Histogram}};
    \node at (-1.5,2.75){$\epsilon=\frac{1}{200},\;\lambda\approx-\frac{107135}{20000000}$};
    \end{tikzpicture}
    \caption{Numerical simulation for the case $-\tilde I_-(s_c^-)< \tilde I_+(s_c^+)$ with $\epsilon=\frac{1}{200}$. The left panel shows a phase-portrait highlighting in red the orbit for $\lambda\approx-\frac{107135}{20000000}$. The right panels show the entry distribution (top), exit distribution (middle), and a histogram of the exit points of the orbits crossing $\sigma^+$. The horizontal coordinate of all the right panels is the height ($y$-component) of points along the sections $\sigma^\pm$. Compare with figure \ref{fig:vdp_1_1} and notice that the exit histogram resembles more the exit density.}
    \label{fig:vdp_1_2}
\end{figure}



    \item[b)  $-\tilde I_-(s_c^-)> \tilde I_+(s_c^+)$] for this case we choose $s_c^-=\frac{1}{10}$ and $s_c^+=\frac{1}{7}$ with $\epsilon=\frac{1}{100}$ and $\epsilon=\frac{1}{200}$, as above,  giving corresponding values of the parameter $\tilde\lambda\approx-\frac{2348}{200000}$ and $\tilde\lambda\approx-\frac{1071435}{200000000}$, respectively. These parameters give the red orbits connecting $s_c^-$ with $s_c^+$, in figures \ref{fig:vdp_2_1} and \ref{fig:vdp_2_2}. For this setup, the value of $s_b^-$ (which satisfies $\tilde I_-(s_b^-)+ \tilde I_+(s_c^+)=0$) is numerically obtained as $s_b^-\approx\frac{651}{10000}$. Some initial conditions are chosen uniformly along the section $\sigma_-$, parametrized by the $y-$coordinate and within an interval around $s_b^-$, distinguishing those initial conditions with $s\leq s_b^-$ and those with $s>s_b^-$ (blue and orange orbits respectively in the phase-portraits). We notice that, as predicted by Proposition \ref{prop:slowrel}, the exit density for the orbits starting below $s_b^-$ is not ``concentrated''(tunnel behaviour), while the exit density corresponding to initial conditions above $s_b^-$ clearly look concentrated near $s_c^+$ (funnel behaviour). As in the previous example, we also compute an histogram of the coordinates of the exit points of the orbits crossing $\sigma^+$. One can indeed notice, from figures \ref{fig:vdp_2_1} and \ref{fig:vdp_2_2}, that the exit distribution corresponding to the funnel region (orbits above $s_b^-$) seems to approach to a Dirac delta as $\epsilon$ decreases, as predicted in Theorem \ref{mainthm-weak}(b).
\end{description}

\begin{figure}[htbp]\centering
    \centering
    \begin{tikzpicture}
    \node at (-2,0){
        \includegraphics[]{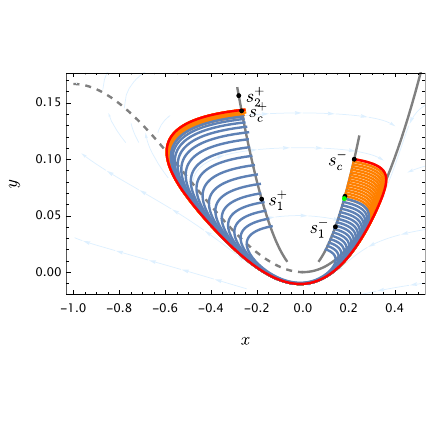}
    };
    \node at (-1.05,1){\footnotesize$\sigma^+$};
    \node at (0.5,1.5){\footnotesize$\sigma^-$};

    \node[green] (sb) at (-.5,0.25){\footnotesize$s_b^-$};
    \node (ss) at (-.5,0.75){\footnotesize$s_2^-$};

    \draw[->,green] (sb.east)--++(.3,.025);
    \draw[->,black] (ss.east)--++(.4,-.35);

    \node at (4,3){
        \includegraphics[]{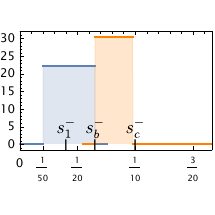}
    };
    
    \node at (4,0){
        \includegraphics[]{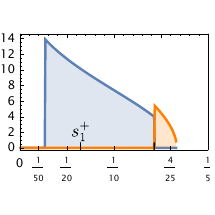}
    };
    
    \node at (4,-3){
        \includegraphics[]{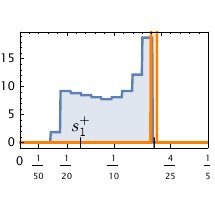}
    };
    \node at (4,4.5){\footnotesize$D_{en}(s)$};
    \node at (4,1.5){\footnotesize$D_{ex}(s)$};
    \node at (4,-1.5){\footnotesize{Exit Histogram}};
    \node at (-1.5,2.75){$\epsilon=\frac{1}{100},\;\tilde\lambda\approx-\frac{2348}{200000}$};
    \end{tikzpicture}
        \caption{Numerical simulation for the case $-\tilde I_-(s_c^-)> \tilde I_+(s_c^+)$ with $\epsilon=\frac{1}{100}$. The left panel shows a phase-portrait highlighting in red the orbit for $\tilde\lambda\approx-\frac{2348}{200000}$ that connects $s_c^-$ with $s_c^+$. The right panels show the entry distribution (top), exit distribution (middle), and a histogram of the exit points of the orbits crossing $\sigma^+$. The exit distribution is computed via \eqref{Frobenius-slow-fast-Lienard}, while the histogram corresponds to the vertical coordinates at $\sigma^+$ of $500$ orbits starting from $\sigma^-$ according to the entry density $D_{en}$.
        On the one hand, it is worth noticing that, from the histogram, the orbits that start above $s_b^-$ concentrate in $\sigma^+$ near $s_c^+$, see Proposition \ref{prop:slowrel} and Theorem \ref{mainthm-weak}(b).  On the other hand, the exit density computed via \eqref{Frobenius-slow-fast-Lienard} (in the second panel) corresponding to the funnel region (orange) is not related to the Dirac measure at $s_c^+$, recall \eqref{mixture-measure}. This same observation holds for the rest of the examples involving a funnel region, see figures \ref{fig:vdp_2_2} and \ref{fig:Ex2-c2}.}
    \label{fig:vdp_2_1}
\end{figure}
\begin{figure}[htbp]\centering
    \begin{tikzpicture}
    \node at (-2,0){
        \includegraphics[]{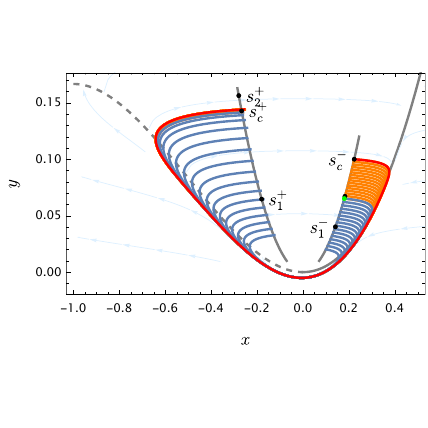}
    };
    \node[green] (sb) at (-.5,0.25){\footnotesize$s_b^-$};
    \node (ss) at (-.5,0.75){\footnotesize$s_2^-$};

    \draw[->,green] (sb.east)--++(.3,.025);
    \draw[->,black] (ss.east)--++(.4,-.35);

    \node at (4,3){
        \includegraphics[]{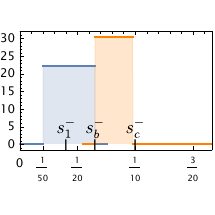}
    };
    
    \node at (4,0){
        \includegraphics[]{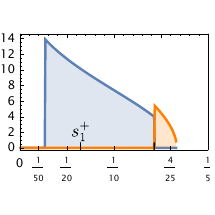}
    };
    
    \node at (4,-3){
        \includegraphics[]{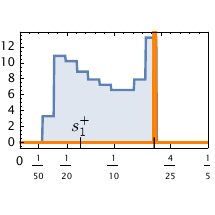}
    };
    \node at (4,4.5){\footnotesize$D_{en}(s)$};
    \node at (4,1.5){\footnotesize$D_{ex}(s)$};
    \node at (4,-1.5){\footnotesize{Exit Histogram}};
    \node at (-1.5,2.75){$\epsilon=\frac{1}{200},\;\lambda\approx-\frac{1071435}{200000000}$};
    \end{tikzpicture}
    \caption{Numerical simulation for the case $-\tilde I_-(s_c^-)> \tilde I_+(s_c^+)$, similar to the one shown in figure \ref{fig:vdp_2_1}, but with $\epsilon=\frac{1}{200}$. The left panel shows a phase-portrait highlighting in red the orbit for $\tilde\lambda\approx-\frac{1071435}{200000000}$, which connects $s_c^-$ with $s_c^+$. The right panels show the entry distribution (top), exit distribution (middle), and a histogram of the exit points of the orbits crossing $\sigma^+$. Compare with figure \ref{fig:vdp_2_1} and notice that the exit histogram resembles more the exit density for the tunnel behaviour, while for the funnel behaviour the exit histogram is thinner. This evidences the fact that according to Proposition \ref{prop:slowrel} and especially Theorem \ref{mainthm-weak}(b), the exit density in the funnel region converges to a Dirac delta. }
    \label{fig:vdp_2_2}
\end{figure}

\end{example}

\clearpage

\begin{example}\label{example-num-non-gen}
    Following a similar idea as in the previous example, let us now consider the non-generic Li\'enard equation, see \eqref{system-non-generic} (or equivalently \eqref{system-non-generic-measure})
    \begin{equation}\label{eq:ex2}
        \begin{split}
            \ddt{x}&=y- x^4\\
        \ddt{y} &= \epsilon (\lambda-x^3),
        \end{split}
    \end{equation}
    but we now (randomly) choose initial conditions $y(t_0)$ from a truncated Cauchy distribution. 
    
    A realisation for the case where $-\tilde I_-(s_c^-)\le \tilde I_+(s_c^+)$ is shown in Fig. \ref{fig:Ex2-c1}. Here $\epsilon=\frac{1}{100}$ and $\tilde\lambda=-\frac{22535}{10000000000}$. For the phase-portrait we choose $50$ initial conditions along $\sigma^-$ according to the truncated distribution ($D_{en}$) shown in the right panel of Fig. \ref{fig:Ex2-c1}. Due to the symmetry of the problem, the entry distribution, which is centred at $s_1^-$ is mapped to a distribution centred close to $s_1^+$ which has the same vertical coordinate. As $\epsilon\to0$, and due to the symmetry again, the exit density along $\sigma^+$ converges (weakly) to the entry density, which is visible in the Figure (keep in mind that the vertical coordinate of $s_1^+$ coincides with that of $s_1^-$ in the limit $\epsilon=0$). We also show a histogram of the vertical coordinates at $\sigma^+$ of $500$ trajectories with initial conditions in $\sigma^-$ according to $D_{en}$. 
    
    \begin{figure}
        \centering
         \begin{tikzpicture}
    \node at (-2,0){
        \includegraphics[]{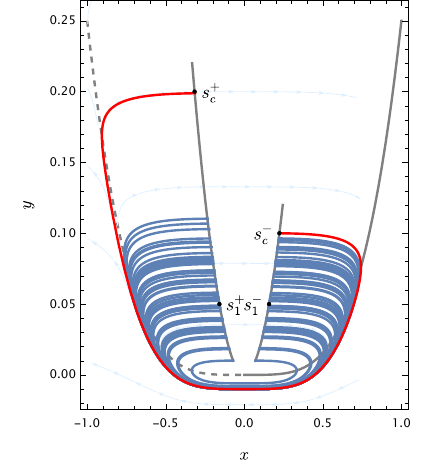}
    };
    \node at (-2.1,3){\footnotesize$\sigma^+$};
    \node at (-.6,.8){\footnotesize$\sigma^-$};

    \node at (4,3){
        \includegraphics[]{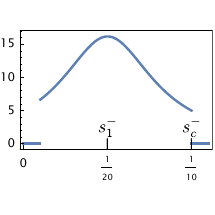}
    };
    
    \node at (4,0){
        \includegraphics[]{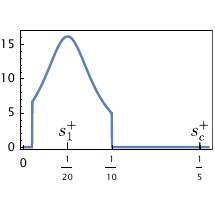}
    };
    
    \node at (4,-3){
        \includegraphics[]{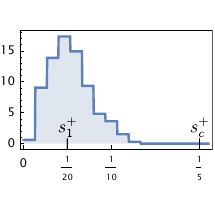}
    };
    \node at (4,4.5){\footnotesize$D_{en}(s)$};
    \node at (4,1.5){\footnotesize$D_{ex}(s)$};
    \node at (4,-1.5){\footnotesize{Exit Histogram}};
    \node at (-1.5,4.25){$\epsilon=\frac{1}{100},\;\lambda\approx-\frac{22535}{10000000000}$};
    \end{tikzpicture}
        \caption{Numerical simulation of the entry-exit tunnel behaviour ($-\tilde I_-(s_c^-)\le \tilde I_+(s_c^+)$) for \eqref{eq:ex2}. The left panel shows a phase portrait where the height of the initial conditions along $\sigma^-$ are chosen according to $D_{en}$. We also show on the right the corresponding exit density $D_{ex}$ computed with \eqref{Frobenius-slow-fast-Lienard} (we recall that this map is for $\epsilon=0$). The histogram shows the distribution of the heights along $\sigma^+$ of the numerical integration of $500$ orbits starting on $\sigma^-$ according to the entry density, and the parameters $(\epsilon,\tilde\lambda)$ previously mentioned.}
        \label{fig:Ex2-c1}
    \end{figure}

Analogously, a realisation for the case where $-\tilde I_-(s_c^-)> \tilde I_+(s_c^+)$ is shown in Fig. \ref{fig:Ex2-c2}. Here $\epsilon=\frac{1}{100}$, $\lambda=\frac{2}{1000000}$, and we also choose $50$ initial conditions along $\sigma^-$ according to the distribution ($D_{en}$) shown in the right panel of Fig. \ref{fig:Ex2-c2}. Similar to the previous example, we see a contrast between the orbits below (tunnel region) and those above (funnel region) $s_b^-$ which is translated into an equivalent exit distribution ($D_{ex}$) and corresponding exit histogram as indicated in Proposition \ref{prop:slowrel} and Theorem \ref{mainthm-weak}. In particular it is evident that the orbits that start above $s_b^-$ are concentrated at $\sigma^+$ near $s_c^+$. 
\begin{figure}
        \centering
        \begin{tikzpicture}
    \node at (-2,0){
        \includegraphics[]{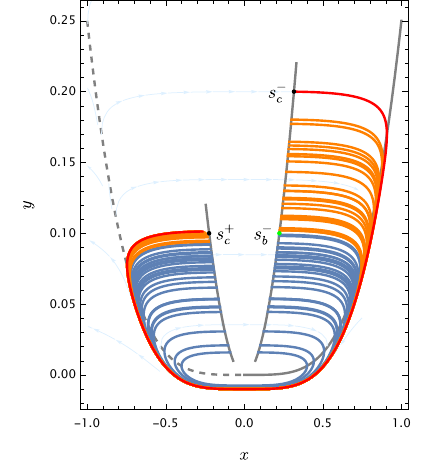}
    };
    \node at (-2.1,0.75){\footnotesize$\sigma^+$};
    \node at (-.5,3.2){\footnotesize$\sigma^-$};

    \node at (4,3){
        \includegraphics[]{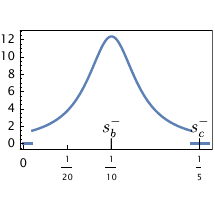}
    };
    
    \node at (4,0){
        \includegraphics[]{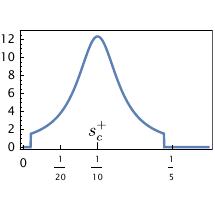}
    };
    
    \node at (4,-3){
        \includegraphics[]{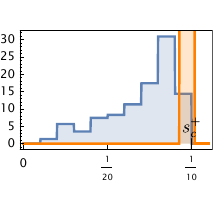}
    };
    \node at (4,4.5){\footnotesize$D_{en}(s)$};
    \node at (4,1.5){\footnotesize$D_{ex}(s)$};
    \node at (4,-1.5){\footnotesize{Exit Histogram}};
    \node at (-1.5,4.25){$\epsilon=\frac{1}{100},\;\lambda\approx\frac{2}{1000000}$};
    \end{tikzpicture}
        \caption{Numerical simulation of the entry-exit behaviour of \eqref{eq:ex2} for the case $-\tilde I_-(s_c^-)> \tilde I_+(s_c^+)$, showing tunnel (blue) and funnel (orange) behaviour. The left panel shows a phase portrait where the height of the initial conditions along $\sigma^-$ are chosen according to $D_{en}$. We also show on the right the corresponding exit density $D_{ex}$ computed with \eqref{Frobenius-slow-fast-Lienard} (we recall that this map is for $\epsilon=0$). The histogram shows the distribution of the heights along $\sigma^+$ of the numerical integration of $500$ orbits starting on $\sigma^-$ according to the entry density, and the parameters $(\epsilon,\tilde\lambda)$ previously mentioned.}
        \label{fig:Ex2-c2}
    \end{figure}
    
\end{example}

\section{Proof of Theorem \ref{theorem-1}--Theorem \ref{theorem-3}}\label{section-proofs}
In this section we prove Theorem \ref{theorem-1}, Theorem \ref{theorem-2} and Theorem \ref{theorem-3}. We assume that  Assumption 1--Assumption 4 are always satisfied. Following Definition \ref{def-1}, if $-\tilde I_-(s_0)\le \tilde I_+(s_0)$, then the slow relation function $S:[0,s_0]\to [0,s_0]$, $S(0)=0$, satisfies
  $$  \tilde I_-(s)+ \tilde I_+(S(s))=0,$$
  for $s\in ]0,s_0]$. This and \eqref{SDI-final-version} imply that for $s\in ]0,s_0]$
  \begin{align}\label{eq-proof1}
      \tilde I(s)&= \tilde I_-(s)+ \tilde I_+(s)\nonumber\\
      &= \tilde I_-(s)+\tilde I_+(S(s))+ \tilde I_+(s)-\tilde I_+(S(s))\nonumber\\
      &=\tilde I_+(s)-\tilde I_+(S(s)).
  \end{align}
Let us recall that $\tilde I_+'(s)>0$ for $s\in ]0,s_0]$ (Section \ref{subsection-model}). From this property and \eqref{eq-proof1} it follows that $s_1\in ]0,s_0]$ is a zero of $\tilde I$ if and only if $s_1$ is a fixed point of the slow relation function $S$. Moreover, if we define a smooth positive function $\Psi(s,w)$ for $s,w\in ]0,s_0]$: $\Psi(s,w)=\frac{\tilde I_+(s)-\tilde I_+(w)}{s-w}$ if $s\ne w$ and $\Psi(s,s)=\tilde I_+'(s)$, then using \eqref{eq-proof1} we get
$$ \tilde I(s)=\Psi(s,S(s))\left(s-S(s)\right),$$
for $s\in ]0,s_0]$. We conclude that $s_1\in ]0,s_0]$ is a zero of $\tilde I$ of multiplicity $l$ if and only if $s_1$ is a zero of $s-S(s)$ of multiplicity $l$. 

If $-\tilde I_-(s_0)> \tilde I_+(s_0)$, then the slow relation function $S:[0,s_0]\to [0,s_0]$, $S(0)=0$, satisfies
  $\tilde I_-(S(s))+ \tilde I_+(s)=0$
  for $s\in ]0,s_0]$, and the study of this case is analogous to the study of the case where $-\tilde I_-(s_0)\le \tilde I_+(s_0)$.

\subsection{Proof of Theorem \ref{theorem-1}}\label{section-proof-1}
We will use the following topological version of the Poincar\'{e} recurrence theorem (see \cite[Theorem 1.2.4]{viana2016foundations}).
\begin{theorem}\label{thm-Poin}
    Let $X$ be a topological space, endowed with its Borel
$\sigma$-algebra $\mathcal B$. Assume that $X$ admits a countable
basis of open sets and that $f:X\to X$ is a measurable transformation. If $\mu$ is an $f$-invariant probability measure on $X$, then $\mu$-almost every $x\in X$ is recurrent for $f$.
\end{theorem}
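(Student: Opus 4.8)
The plan is to deduce the topological statement from the classical measure--theoretic Poincar\'e recurrence lemma, and then use the countable basis (second countability) to upgrade ``returns to a fixed set'' into ``returns to every neighbourhood''. The only input beyond elementary measure theory is $f$-invariance of $\mu$.

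First I would establish the measure--theoretic core. For an arbitrary measurable set $E$, let $E_0=\{x\in E:\ f^n(x)\notin E\ \text{for all}\ n\ge 1\}$ be the set of points of $E$ that never return to $E$ in forward time. The key observation is that the preimages $f^{-n}(E_0)$, $n\ge 0$, are pairwise disjoint: if $y\in f^{-m}(E_0)\cap f^{-n}(E_0)$ with $0\le m<n$, then $f^m(y)\in E_0\subseteq E$ and $f^n(y)=f^{\,n-m}\bigl(f^m(y)\bigr)\in E_0\subseteq E$, so the point $f^m(y)\in E_0$ returns to $E$ after $n-m\ge 1$ steps, contradicting the definition of $E_0$. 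Since $\mu$ is $f$-invariant we have $\mu(f^{-n}(E_0))=\mu(E_0)$ for all $n$, and disjointness together with $\mu(X)=1$ forces $\sum_{n\ge 0}\mu(E_0)\le 1$, hence $\mu(E_0)=0$. A small refinement of the same bookkeeping shows that the set of $x\in E$ returning to $E$ only finitely often is also $\mu$-null: such an $x$ has a last return time $m\ge 0$, so $f^m(x)\in E_0$, and therefore this ``finitely recurrent'' set is contained in $\bigcup_{m\ge 0}f^{-m}(E_0)$, a countable union of sets of measure $\mu(E_0)=0$.

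Next I would do the topological reduction. Fix a countable basis $\{U_k\}_{k\in\mathbb{N}}$ of open subsets of $X$. Applying the refined lemma to each $E=U_k$ produces a $\mu$-null set $N_k\subseteq U_k$ consisting of the points of $U_k$ that return to $U_k$ only finitely often; set $N=\bigcup_{k}N_k$, so $\mu(N)=0$. I claim every $x\in X\setminus N$ is recurrent for $f$. Indeed, let $U$ be any open neighbourhood of $x$; since $\{U_k\}$ is a basis there is $k$ with $x\in U_k\subseteq U$, and since $x\notin N_k$ there are infinitely many $n\ge 1$ with $f^n(x)\in U_k\subseteq U$. As $U$ was arbitrary, $x$ returns to each of its neighbourhoods (infinitely often), which is exactly the notion of recurrence; hence $\mu$-almost every $x\in X$ is recurrent.

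I do not expect a genuine obstacle: this is the standard argument. The points requiring care are (i) matching the set-theoretic ``no return / finitely many returns'' bookkeeping to the precise definition of topological recurrence used in Section \ref{section-proof-1} --- the refinement above is what delivers the ``infinitely often'' version if that is the definition in force, while the bare lemma already suffices for the ``at least once'' version; (ii) noting that the disjointness of the $f^{-n}(E_0)$ uses only measurability of $f$, not injectivity; and (iii) observing that second countability enters exactly once, namely to make the exceptional set $N$ a countable union of $\mu$-null sets.
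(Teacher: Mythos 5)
Your argument is correct and is the standard proof; note that the paper does not actually prove Theorem \ref{thm-Poin} but quotes it from \cite[Theorem 1.2.4]{viana2016foundations}, where precisely your two-step scheme is used (measure-theoretic Poincar\'e recurrence with the ``infinitely many returns'' refinement applied to each element of a countable basis, followed by a countable union of null sets). The only cosmetic point is that the paper's definition of recurrence is sequential ($f^{n_i}(x)\to x$ for some $n_i\to\infty$); your conclusion that $x$ visits every neighbourhood infinitely often delivers such a sequence because a second-countable space is first countable, so one extracts $n_1<n_2<\cdots$ by running through a decreasing countable neighbourhood base at $x$.
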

We say that $x\in X$ is recurrent for $f:X\to X$ if $f^{n_i}(x)\to x$ for some sequence $n_i\to\infty$. Whenever we say that some property holds for $\mu$-almost every $x\in X$ we mean that the said property holds for all $x\in X\backslash Y$, with $\mu(Y)=0$.

If $X$ is the compact metric space $[0,s_0]$ and $f$ is the slow relation function $S:[0,s_0]\to [0,s_0]$ (recall that $S$ is continuous), then assumptions of Theorem \ref{thm-Poin} are satisfied.\\
\\
\textit{Proof of Theorem \ref{theorem-1}.1.} Since $S(0)=0$, $\delta_0$ is $S$-invariant. We know that $\tilde I$ has no zeros in $]0,s_0]$ if and only if $S$ has no fixed points in $]0,s_0]$ (see the paragraph after \eqref{eq-proof1}). 

Since $S$ is increasing on $[0,s_0]$, for each $s\in [0,s_0]$ the sequence $S^n(s)$ is bounded and monotone (thus, convergent) and its limit has to be a fixed point of $S$. This implies that $s\in [0,s_0]$ is recurrent for $S$ if and only if $s$ is a fixed point of $S$. 

Assume that $S$ has no fixed points in $]0,s_0]$ ($s=0$ is the unique recurrent point). Then Theorem \ref{thm-Poin} implies that for each $S$-invariant probability measure $\mu$ on $[0,s_0]$ we have $\mu(\{0\})=1$. We conclude that $\mu=\delta_0$ and $S$ is therefore uniquely ergodic. 

Suppose now that $S$ is uniquely ergodic. Then there is a unique $S$-invariant probability
measure ($\delta_0$). It is clear that $S$ has no fixed points in $]0,s_0]$ (if $S(s)=s$ for some $s\in ]0,s_0]$, then $\delta_s$ is a new $S$-invariant probability
measure). This completes the proof of Theorem \ref{theorem-1}.1.\\
\\
\textit{Proof of Theorem \ref{theorem-1}.2.} We know that $s_1\in ]0,s_0]$ is a zero of $\tilde I$ if and only if $s_1$ is a fixed point of $S$. Now, it suffices to notice that a Dirac measure $\delta_{s_1}$ is
$S$-invariant if and only if $s_1$ is a fixed point of $S$.\\
\\
\textit{Proof of Theorem \ref{theorem-1}.3.} Suppose that $\tilde I$ has $k$ zeros $s_1<\dots <s_k$ in $]0,s_0]$. Then $S$ has $k+1$ fixed points $0,s_1,\dots,s_k$ in $[0,s_0]$ and $\delta_0,\delta_{s_1},\dots,\delta_{s_k}$ are $S$-invariant. Thus, $0,s_1,\dots,s_k$ are the unique recurrent points for $S$ (see the proof of Theorem \ref{theorem-1}.1) and Theorem \ref{thm-Poin} implies that for every $S$-invariant probability measure $\mu$ on $[0,s_0]$ we have $\mu(\{0,s_1,\dots,s_k\})=1$ and 
$$\mu=\mu(\{0\})\delta_0+\mu(\{s_1\})\delta_{s_1}+\dots+\mu(\{s_k\})\delta_{s_k}.$$
Since the set of all $S$-invariant probability measures is convex, we get \eqref{convex-set}.

Conversely, suppose that the set $\mathcal P_S$ of all $S$-invariant probability measures is given by \eqref{convex-set}. Then $\delta_s\in \mathcal P_S$ if and only if $s\in\{0,s_1,\dots,s_k\}$. Then $S$ has $k$ fixed points $s_1,\dots,s_k$ in $]0,s_0]$. Thus, $\tilde I$ has $k$ zeros in $]0,s_0]$. This completes the proof of Theorem \ref{theorem-1}.3.

\subsection{Proof of Theorem \ref{theorem-2}}\label{section-proof-2}
We suppose that $X_{\lambda,\epsilon}$ has a slow-fast Hopf point at $p_0$ for $\lambda=\lambda_0$. Let $S:[0,s_0]\to [0,s_0]$ be the slow relation function. \\
\\
\textit{Proof of Theorem \ref{theorem-2}.1.} Assume that $S$ is uniquely ergodic. Then Theorem \ref{theorem-1}.1 implies that the slow divergence integral $\tilde I$ has no zeros in $]0,s_0]$. Following \cite[Proposition 2.2]{DM} or \cite{de2021canard}, we have $\cycl(X_{\lambda,\epsilon},\Gamma_s)\le 1$ for all $s\in]0,s_0]$, and the limit cycle, if it exists, is hyperbolic and attracting (resp. repelling) if $\tilde I(s)<0$ (resp. $>0$).\\
\\
\textit{Proof of Theorem \ref{theorem-2}.2.}
Suppose that a Dirac delta measure $\delta_{s_1}$ is $S$-invariant for some $s_1\in ]0,s_0]$. Then from Theorem \ref{theorem-1}.2 it follows that $\tilde I$ has a zero at $s=s_1$ with the multiplicity equal to the multiplicity of the fixed point $s_1$ of $S$, denoted by $l$ (see also the paragraph after \eqref{eq-proof1}). If $l<\infty$, then \cite[Proposition 2.3]{DM} (or \cite{de2021canard}) implies that $\cycl(X_{\lambda,\epsilon},\Gamma_{s_1})\le l+1$.

\subsection{Proof of Theorem \ref{theorem-3}}\label{section-proof-3}
We focus on \eqref{system-non-generic} with a fixed $\mathbbmss n_1\ge 1$ and assume that the set of all $S$-invariant probability measures is given by \eqref{convex-set} for some $0<s_1<\dots <s_k<s_0$. Then Theorem \ref{theorem-1}.3 implies that $s_1,\dots,s_k$ are zeros of $\tilde I$ in $]0,s_0]$. Thus, if we take any $s_{k+1}\in ]s_k,s_0]$, then $\tilde I(s_{k+1})\ne 0$. Since we assume that the fixed points $s_1,\dots,s_k$ of $S$ are hyperbolic, we have that $s_1,\dots,s_k$ are simple zeros of $\tilde I$. Now, Theorem \ref{theorem-3} follows from \cite[Theorem 2]{SDICLE1} (see also \cite{DM-entryexit}).

\section{Proof of Theorem \ref{mainthm-weak}}\label{proof-thmweak}

\textit{Proof of Theorem \ref{mainthm-weak}(a).} Assume that $-\tilde I_-(s_c^-)\le \tilde I_+(s_c^+)$ and that $\mu_{\tilde\epsilon}$ converges  weakly to $\mu_0$, i.e.
\begin{equation}\label{eq-weak-1}
\lim_{\tilde\epsilon\to 0}\int_L\chi(s^-)\mu_{\tilde\epsilon}(ds^-)=\int_L\chi(s^-)\mu_{0}(ds^-),
\end{equation}
for every bounded, continuous function $\chi:\mathbb R\to\mathbb R$ ($\mu_{\tilde\epsilon},\mu_0$ are supported on $L$ and we may use $L$ instead of $\mathbb R$ in the definition of weak convergence, see Section \ref{sec-ergodic}). For a bounded and continuous function $\chi:\mathbb R\to\mathbb R$ we have
\begin{align}\label{eq-weak-2}  \int_{S_{\tilde\epsilon}(L)}\chi(s^+)\mu_{\tilde\epsilon}S_{\tilde\epsilon}^{-1}(ds^+)&=\int_{L}\chi(S_{\tilde\epsilon}(s^-))\mu_{\tilde\epsilon}(ds^-)\nonumber\\
    &=\int_{L}\left(\chi(S_{\tilde\epsilon}(s^-))-\chi(S_{0}(s^-))\right)\mu_{\tilde\epsilon}(ds^-)\nonumber\\
    & \ \ \ +\int_{L}\chi(S_{0}(s^-))\mu_{\tilde\epsilon}(ds^-),
\end{align}
where in the first step we use a well-known formula for the integration under a push-forward measure (see e.g. \cite[Section 2]{Bill}). Since $\chi\circ S_{0}$ is bounded and continuous, from \eqref{eq-weak-1} it follows that the second integral in \eqref{eq-weak-2} converges to $\int_{L}\chi(S_{0}(s^-))\mu_{0}(ds^-)=\int_{T}\chi(s^+)\mu_{0}S_{0}^{-1}(ds^+)$ as $\tilde \epsilon\to 0$ (again we use the above mentioned formula for integration). Thus, it suffices to show that the first integral in \eqref{eq-weak-2} converges to $0$ as $\tilde \epsilon\to 0$. Then we have that $\mu_{\tilde\epsilon}S_{\tilde\epsilon}^{-1}$ converges weakly to $\mu_0S_0^{-1}$.

It is clear that there exists a bounded segment $\widetilde T$ (for example, $\widetilde T=[0,s_c^+]$) such that $S_{\tilde\epsilon}(L)\subset\widetilde T$ for all $\tilde \epsilon\in[0,\tilde\epsilon_0]$, with a sufficiently small $\tilde\epsilon_0>0$. Let $\varrho_1>0$ be an arbitrary and fixed real number. Since $\chi$ is uniformly continuous on $\widetilde T$, there exists a $\varrho_2>0$ such that for every $x,y\in \widetilde T$ with $|x-y|<\varrho_2$ we have $$|\chi(x)-\chi(y)|<\varrho_1.$$
Since $S_{\tilde\epsilon}$ converges to $S_{0}$ as $\tilde\epsilon\to 0$, uniformly in $L$ (see Proposition \ref{prop:slowrel}(a)), for all $\tilde \epsilon\in]0,\tilde\epsilon_0]$ and $s^-\in L$ we have 
$$|S_{\tilde\epsilon}(s^-)-S_0(s^-)|<\varrho_2,$$
up to shrinking $\tilde\epsilon_0$ if needed. Putting all this together, for $\tilde \epsilon\in]0,\tilde\epsilon_0]$ we get 
\begin{align}\label{eq-weak-3}  \left|\int_{L}\left(\chi(S_{\tilde\epsilon}(s^-))-\chi(S_{0}(s^-))\right)\mu_{\tilde\epsilon}(ds^-)\right| &\le \int_{L}\left|\chi(S_{\tilde\epsilon}(s^-))-\chi(S_{0}(s^-))\right|\mu_{\tilde\epsilon}(ds^-) \nonumber\\
    & \ \ \ <\int_L\varrho_1\mu_{\tilde\epsilon}(ds^-)=\varrho_1,\nonumber
\end{align}
where in the last step we use the fact that $\mu_{\tilde\epsilon}$ is a probability measure supported on $L$. Thus, we have proved that for every $\varrho_1>0$ there is $\tilde\epsilon_0>0$ (small enough) such that the above inequality holds for all $\tilde \epsilon\in]0,\tilde\epsilon_0]$. This implies that the first integral in \eqref{eq-weak-2} converges to $0$ as $\tilde \epsilon\to 0$.
This completes the proof of Theorem \ref{mainthm-weak}(a). \\
\\
\textit{Proof of Theorem \ref{mainthm-weak}(b).} Suppose that $-\tilde I_-(s_c^-)>\tilde I_+(s_c^+)$ and that $\mu_{\tilde\epsilon}$ converges weakly to $\mu_0$ as $\tilde \epsilon\to 0$, see \eqref{eq-weak-1}. Let us recall that the function $\widetilde S_0:L\to T\cap ]-\infty,s_c^+]$ is defined in \eqref{mixture-measure-push-forward}. It suffices to show that $S_{\tilde\epsilon}$ converges to $\widetilde S_{0}$ as $\tilde\epsilon\to 0$, uniformly in $L$. Then the proof of (b) is analogous to the proof of (a) (we replace $S_0$ with $\widetilde S_{0}$ and the segment $T$ with the segment $T\cap ]-\infty,s_c^+]$).

Let us prove that $S_{\tilde\epsilon}$ uniformly converges to $\widetilde S_{0}$ as $\tilde\epsilon\to 0$. Let $\tilde\varrho_1>0$ be an arbitrarily small but fixed real number. Using Proposition \ref{prop:slowrel}(b) (the tunnel region) we may assume that $s_c^+-\frac{\tilde\varrho_1}{2}\in S_{\tilde \epsilon}(L)$ for all $\tilde \epsilon> 0$ small enough.

Since $\widetilde S_{0}$ is continuous in the buffer point $s_b^-$ ($s_b^-$ is in the interior of $L$) and $\widetilde S_0(s_b^-)=s_c^+$, there is a $\tilde\varrho_2>0$ small enough such that for every $s^-\in L$ with $|s^--s_b^-|<\tilde\varrho_2$ we have 
\begin{equation}
\label{proofb2tilde}
|\widetilde S_{0}(s^-)-s_c^+|<\frac{\tilde\varrho_1}{2}.
\end{equation} 

Proposition \ref{prop:slowrel} implies that $S_{\tilde\epsilon}^{-1}(s_c^+-\frac{\tilde\varrho_1}{2})\to S_{0}^{-1}(s_c^+-\frac{\tilde\varrho_1}{2})$ as $\tilde \epsilon\to 0$ and $S_{0}^{-1}(s_c^+-\frac{\tilde\varrho_1}{2})<s_b^-$. (Indeed, first we apply $(x,t)\to (-x,-t)$ to \eqref{system-non-generic-measure}, with $\tilde \lambda=\tilde\lambda_c(\tilde \epsilon)$. The new system is of type \eqref{system-non-generic-measure}, with $\tilde \lambda=-\tilde\lambda_c(\tilde \epsilon)$, having the orbit connecting $s_c^+$ with $s_c^-$, and having $S_{0}^{-1}$ as the slow relation function. Then it suffices to apply Proposition \ref{prop:slowrel}(a) to the new system.) From this property it follows that $S_{\tilde \epsilon}^{-1}(s_c^+-\frac{\tilde\varrho_1}{2})<s_b^--\tilde\varrho_2<s_b^-$ for every $\tilde \epsilon\in]0,\tilde\epsilon_0]$, with $\tilde\epsilon_0>0$ small enough (we take a smaller $\tilde\varrho_2>0$ if necessary and fix it).   Then, since system \eqref{system-non-generic-measure}, with $\tilde \lambda=\tilde\lambda_c(\tilde \epsilon)$, has the orbit connecting $s_c^-\in \sigma_-$ with $s_c^+\in \sigma_+$ and the segment $L$ lies below $s_c^-$ (see Fig. \ref{fig-entry-exit}(b)), we get 
\begin{equation}\label{formulatilda}
s_c^+-\frac{\tilde\varrho_1}{2}<S_{\tilde\epsilon}(s^-)<s_c^+,
\end{equation}
for all $s^-\in L\cap]s_b^--\tilde\varrho_2,+\infty[$ and $\tilde \epsilon\in ]0,\tilde\epsilon_0]$.

Now, we have 
\begin{align}\label{formulatilda+}
 |S_{\tilde\epsilon}(s^-)-\widetilde S_0(s^-)|\le  |S_{\tilde\epsilon}(s^-)-s_c^+|+|s_c^+-\widetilde S_0(s^-)|< \frac{\tilde\varrho_1}{2}+\frac{\tilde\varrho_1}{2}=\tilde\varrho_1,
\end{align}
for all $s^-\in L\cap]s_b^--\tilde\varrho_2,+\infty[$ and $\tilde \epsilon\in ]0,\tilde\epsilon_0]$. We used \eqref{proofb2tilde}, \eqref{formulatilda} and the fact that $\widetilde S_0(s^-)=s_c^+$ for $s^-\in L\cap[s_b^-,+\infty[$, see \eqref{mixture-measure-push-forward}.

On the other hand, since $S_{\tilde\epsilon}$ converges to the slow relation function $S_{0}$ as $\tilde\epsilon\to 0$, uniformly in the compact set $L\cap]-\infty,s_b^--\tilde\varrho_2]$ (see the tunnel case in Proposition \ref{prop:slowrel}(b)) and $\widetilde S_0(s^-)=S_0(s^-)$ for $s^-\in L\cap]-\infty,s_b^--\tilde\varrho_2]$, we get
\begin{align}\label{formulatilda-}
 |S_{\tilde\epsilon}(s^-)-\widetilde S_0(s^-)|<\tilde\varrho_1,
\end{align}
for all $s^-\in L\cap]-\infty,s_b^--\tilde\varrho_2]$ and $\tilde \epsilon\in ]0,\tilde\epsilon_0]$ (up to shrinking $\tilde\epsilon_0$ if necessary).

Combining \eqref{formulatilda+} and \eqref{formulatilda-} we obtain the uniform convergence on $L$. This completes the proof of Theorem \ref{mainthm-weak}(b).

\bibliographystyle{abbrv}
\bibliography{bibliography}

\end{document}